\def\dd{\hbox{-}}
\title[]{Graphs with no even holes and no sector wheels are the union of two chordal graphs}
\author{Tara Abrishami} \thanks{T. Abrishami:
Department of Mathematics, University of Hamburg, Germany.\\ \url{tara.abrishami@uni-hamburg.de}. (This work was performed while the author was at Princeton University.) Supported by NSF-EPSRC Grant DMS-2120644 and by AFOSR grant FA9550-22-1-0083}
\author{Eli Berger} \thanks{
E. Berger: Department of Mathematics, University of Haifa,  Israel. \url{berger@math.haifa.ac.il}.}
\author{Maria Chudnovsky} \thanks{M. Chudnovsky: Department of Mathematics, Princeton University, USA.\\ \url{mchudnov@math.princeton.edu}. Supported by NSF-EPSRC Grant DMS-2120644 and by AFOSR grant FA9550-22-1-008.}
\author{Shira Zerbib} \thanks{S. Zerbib: Department of of Mathematics, Iowa State University, USA. \url{zerbib@iastate.edu}. Supported by NSF Grant DMS-1953929. \\E. Berger, M. Chudnovsky, and S. Zerbib were also supported by BSF grant 2016077.}
\newtheorem{thm}{Theorem}[section]
\newtheorem{lem}[thm]{Lemma}
\newcounter{tbox}
\newcommand{\sta}[1]{\vspace*{0.3cm}\refstepcounter{tbox}\noindent{\parbox{\textwidth}{(\thetbox) \emph{#1}}}\vspace*{0.3cm}}
\begin{document}

\begin{abstract}
Sivaraman \cite{conjecture} conjectured that if $G$ is a graph with no induced even cycle then there exist sets $X_1, X_2 \subseteq V(G)$ satisfying $V(G) = X_1 \cup X_2$ such that the induced graphs $G[X_1]$ and $G[X_2]$ are both chordal. We prove this conjecture in the special case where $G$ contains no sector wheel, namely, a pair $(H, w)$ where $H$ is an induced cycle of  $G$ and $w$ is a vertex in $V(G) \setminus V(H)$ such that $N(w) \cap H$ is either $V(H)$ or a path with at least three vertices. 
\end{abstract}
\maketitle
\section{Introduction}

The degree $d(v)$ of a vertex $v$ in a graph $G$  is the number of edges in $G$ containing $v$.
The length of a path or a cycle is the number of edges in it. 
A graph is {\em even-hole-free} if it does not contain an induced cycle of even length. Even-hole-free graphs are the subject of intense interest and study in structural graph theory. Much is known about the structure of even-hole-free graphs: for example, even-hole-free graphs have a decomposition theorem \cite{daSilva2013Decomposition2-joins}, are known to have {\em bisimplicial vertices} (vertices whose neighborhoods are the union of two cliques) \cite{ehf-bisimplicial}, and can be recognized in polynomial time \cite{recognition}. Even-hole-free graphs have also been well-studied with respect to algorithmic parameters such as independence number, chromatic number, and treewidth. See \cite{ehf-survey} for a survey of even-hole-free graphs. 

A graph is {\em chordal} if it contains no induced cycles of length four or greater. In 2020, Sivaraman conjectured that every even-hole-free graph can be written as the union of two chordal induced subgraphs \cite{conjecture}. In this paper, we prove Sivaraman's conjecture under an additional assumption. 

A {\em wheel} $(H, w)$ is a hole $H$ and a vertex $w \in V(G) \setminus H$ such that $|N(w) \cap H| \geq 3$. A {\em universal wheel} is a wheel $(H, w)$ such that $w$ is complete to $H$. A {\em sector wheel} is a wheel $(H, w)$ such that either $(H, w)$ is a universal wheel or $N(w) \cap H$ is a path. A pair $(X_1, X_2)$ of vertex subsets of $G$ such that $X_1 \cup X_2 = V(G)$ and $G[X_1]$ and $G[X_2]$ are chordal is called a {\em chordal cover of $G$}.  
 We prove: 
\begin{thm}\label{thm:overall-thm}
    Every even-hole-free graph with no sector wheel admits a chordal cover. 
\end{thm}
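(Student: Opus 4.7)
The plan is to proceed by induction on $|V(G)|$, combining clique-cutset decomposition with the existence of bisimplicial vertices in even-hole-free graphs.

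If $G$ has a clique cutset $C$, split $G$ along $C$ into induced subgraphs $G_1, G_2$. Both inherit even-hole-freeness and the absence of sector wheels, since these properties pass to induced subgraphs, so by induction each $G_i$ admits a chordal cover. Because $C$ is a clique in both pieces, the covers can be glued along $C$ without creating new induced cycles of length at least four, and we are done.

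If $G$ has no clique cutset, I would invoke the theorem of Addario-Berry, Chudnovsky, Havet, Reed, and Seymour to find a bisimplicial vertex $v$ in $G$, with $N(v) = K_1 \cup K_2$ for cliques $K_1, K_2$. Apply induction to $G - v$ to obtain a chordal cover $(X_1', X_2')$, and extend by placing $v$ into one or both sides. Adding $v$ to $X_i$ is safe precisely when $N(v) \cap X_i'$ lies inside a single $K_j$, because then $v$ is simplicial in $G[X_i' \cup \{v\}]$. Note that even-hole-freeness of $G$ already rules out any $C_4$ obstruction through $v$, so the only possible new hole would have length at least five and would come from an induced path of length at least three in $G[X_i']$ from $K_1 \cap X_i'$ to $K_2 \cap X_i'$ whose interior avoids $N(v)$.

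The main obstacle is proving that a suitable placement of $v$ always exists, perhaps after modifying the inductive cover. My approach would be to strengthen the inductive hypothesis to produce covers satisfying prescribed boundary constraints on $K_1 \cup K_2$ (for instance, asking one side to be disjoint from $K_1$ and the other from $K_2$). If no such compatible cover exists, then each side $G[X_i']$ contains a witness induced $K_1$-to-$K_2$ path ``around'' $v$; splicing these two paths with the clique edges between their endpoints yields a long cycle in $G$, and extracting an induced hole $H$ from it while analyzing the neighborhood of $v$ (or of a suitably chosen vertex of $K_1 \cup K_2$) on $H$ should exhibit a sector wheel, contradicting the hypothesis. Turning this splicing idea into an actual proof---carefully extracting the induced hole and verifying that the resulting wheel's neighborhood is either all of $H$ or a path of at least three vertices---is where I expect the bulk of the technical work to live.
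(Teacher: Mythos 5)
There is a genuine gap, and it sits exactly where you predict it will: the re-insertion of the bisimplicial vertex. Your plan for the case with no clique cutset is to strengthen the induction so that the cover of $G-v$ satisfies prescribed boundary constraints on $K_1\cup K_2$, and, if no compatible cover exists, to splice the two witness $K_1$-to-$K_2$ paths into a hole exhibiting a sector wheel. But the two witness paths live in $G[X_1']$ and $G[X_2']$, which may share many vertices, so their union need not contain an induced cycle passing through both; and even when an induced hole $H$ can be extracted, there is no reason the neighborhood on $H$ of $v$ (or of a vertex of $K_1\cup K_2$) should be a path or all of $H$ --- even-hole-free graphs contain many wheels that are not sector wheels, which is precisely why the paper does not argue this way. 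Instead the paper strengthens the induction to a ``precover'' on the closed neighborhood of a flat path and runs it through the full decomposition theorem for even-hole-free graphs (clique, hole, pyramid, extended nontrivial basic graph, 2-join, star cutset), with a separate and substantial argument for each outcome; in particular the proper-star-cutset case (Section 5) and the 2-join case (Section 3) carry most of the weight, and neither is visible in your sketch. The step you defer as ``the bulk of the technical work'' is, in effect, the entire content of the paper, and it is not clear it can be carried out by local analysis around a single bisimplicial vertex.

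A secondary issue: in the clique-cutset case you cannot glue arbitrary chordal covers of the two pieces. If $c\in C$ is placed on side $1$ in the cover of $G_1$ but on side $2$ in the cover of $G_2$, then $X_1\cap V(G_1)$ strictly contains $X_1^1$, and adding vertices of $C$ to $X_1^1$ can create a hole inside $G_1$ (take $C=\{q_1,q_2\}$ and an induced path in $G_1\setminus C$ joining a neighbor of $q_1$ to a neighbor of $q_2$). So the covers of the two pieces must be made consistent on $C$, which already forces the kind of boundary-prescribed inductive statement you only introduce later; the paper's weakly-FPE formulation and Lemma 5.2 are designed to supply exactly this consistency.
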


\subsection{Proof outline}
Our proof depends on a decomposition theorem for even-hole-free graphs proved in \cite{daSilva2013Decomposition2-joins}. We first need some definitions. Let $T$ be a tree and write $V_1, V_2$ for its two sides when viewed as a bipartite graph. Let $L$ denote the set of leaves of $T$ and write $L_1 = L \cap V_1$, $L_2 = L \cap V_2$.
For each $v \in L$ we write $e(v)$ for the unique edge of $T$ incident with $v$. 
We construct a graph $B(T)$ as follows: the set of vertices of $B(T)$ is $E(T) \cup \{x_1,x_2\}$, where $x_1, x_2$ are two additional vertices. Two vertices of $B(T)$ are adjacent if one of the following holds:

\begin{itemize}
    \item They represent two edges of $T$ with a common vertex, or
    \item one of them is $x_i$ and the other is $e(v)$ and $v \in L_i$ for some $i \in \{1,2\}$,  or
    \item they are $x_1$ and $x_2$.
\end{itemize}

Note that the vertex set of every induced cycle in $B(T)$ with at least 4 vertices consists of the edge set of some path in $T$ between two leaves together with either $x_1$ or $x_2$ or both. A graph $G$ is an {\em extended nontrivial basic graph} if $G = B(T)$ for some tree $T$ with at least three leaves and at least two non-leaves. (Note that if $T$ is a path graph then $B(T)$ is a cycle, and if $T$ is a star then $B(T)$ is a clique. Hence it makes sense to exclude these cases and deal with them separately.)

A {\em 2-join} of a graph $G$ is a partition $(A_1, C_1, B_1, A_2, C_2, B_2)$ of $V(G)$ such that the following hold:
\begin{itemize}
    \item  $A_1$ is complete to $A_2$,  $B_1$ is complete to $B_2$,  and there are no other edges of $E(G)$ with one end in $Z_1 := A_1 \cup C_1 \cup B_1$ and one end in $Z_2 := A_2 \cup C_2 \cup B_2$, and 
    \item for $i = 1, 2$, $Z_i$ contains an induced path $M_{i} = (a, m_1, \ldots, m_k, b)$ with one end $a\in A_i$, one end $b\in B_i$, and $\{m_1, \hdots, m_k\} \subseteq C_{i}$ (where $k$ may be $0$), which we call the {\em marker path} for $Z_i$, and $Z_i$ is not just this path.
\end{itemize}

A {\em pyramid} is a graph consisting of a vertex $a$ called the {\em apex}, a triangle $b_1b_2b_3$ called the {\em base}, and three paths $P_i$ from $a$ to $b_i$, each of which has length at least one, at most one of which has length exactly one, such that the only edge from $P_i \setminus \{a\}$ to $P_j \setminus \{a\}$ is $b_ib_j$ for all $\{i, j\} \subseteq \{1, 2, 3\}$. A graph $G$ has a {\em star cutset} if $G$ is connected and if there is a vertex $v \in V(G)$ and a set $C \subseteq N[v]$ with $v \in C$ such that $G \setminus C$ is not connected. The set $C$ is called a {\em star cutset} of $G$. A {\em clique cutset} of a graph $G$ is a set $C \subseteq V(G)$ such that $C$ is a clique and $G \setminus C$ is not connected. A star cutset is {\em proper} if it is not a clique cutset.

We now can state the decomposition theorem we use:

\begin{thm}[\cite{daSilva2013Decomposition2-joins}]
\label{thm:decomposition}
Let $G$ be an even-hole-free graph. Then one of the following holds: 
\begin{itemize}
    \item $G$ is a clique; 
    \item $G$ is a hole;
    \item $G$ is a pyramid; 
    \item $G$ is an extended nontrivial basic graph;
    \item $G$ has a 2-join; or
    \item $G$ has a star cutset. 
\end{itemize}
\end{thm}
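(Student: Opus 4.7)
The plan is to prove Theorem~\ref{thm:decomposition} by the standard Truemper-style structural analysis that is now routine for graph classes defined by forbidden induced subgraphs. Suppose for contradiction that $G$ is a connected even-hole-free graph that is not a clique, not a hole, not a pyramid, not an extended nontrivial basic graph, and has neither a 2-join nor a star cutset; the goal is to derive a contradiction.

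First I would eliminate two of the four classical Truemper configurations a priori. A \emph{theta} consists of three internally vertex-disjoint induced paths of lengths $\ell_1,\ell_2,\ell_3 \geq 2$ between a common pair of endpoints, combining pairwise into induced cycles of lengths $\ell_i+\ell_j$. The three sums $\ell_1+\ell_2$, $\ell_1+\ell_3$, $\ell_2+\ell_3$ have total $2(\ell_1+\ell_2+\ell_3)$, so they cannot all be odd; hence every theta contains an induced even cycle and $G$ is theta-free. An analogous parity argument, accounting for the two triangle caps, shows $G$ is prism-free. Thus the only induced three-path configurations surviving in $G$ are pyramids and wheels.

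Next I would fix an induced hole $H \subseteq G$ and classify each external vertex $w$ by the set $N(w) \cap V(H)$. The central technical lemma, which is the parity backbone of the whole argument, is that if $N(w) \cap V(H)$ is neither a small clique on $H$ nor a path on $H$, then two carefully chosen neighbors of $w$ on $H$ together with an appropriate arc of $H$ and $w$ itself produce an induced even cycle, contradicting the hypothesis. This forces a restrictive ``wheel classification'': every external vertex either has at most two neighbors on $H$ or makes $(H,w)$ a specific type of wheel. Iterating this over all holes and grouping external vertices into clusters by their attachment patterns, I would show that each such cluster either yields a star cutset at a hub dominating much of $H$, or yields a 2-join whose marker paths are the two arcs of $H$ between the attachment sites of the cluster, or else embeds as a local piece of the skeleton of an extended nontrivial basic graph.

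The main obstacle, and by far the bulk of the work, is the final case: once pyramids, 2-joins, star cutsets, and the ``degenerate'' wheel configurations are ruled out, I must reconstruct a tree $T$ from $G$ and verify that $G = B(T)$. The natural recipe is to take the minimal induced separating paths of $G$ as edges of $T$, to glue them at shared ends into the non-leaf vertices of $T$, and to identify the two distinguished vertices $x_1,x_2$ with the two vertices of $G$ that are each universal to exactly one side of the bipartition of $T$ on leaf-edges. The delicate step is showing that, under the standing hypotheses, no vertex or edge of $G$ can lie outside this skeleton: a long list of local ``rogue'' configurations must each be ruled out by combining even-hole-freeness with the absence of star cutsets and 2-joins, exactly along the blueprint of da Silva and Vu\v{s}kovi\'{c}. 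This case analysis is where the real length of the proof lives, and I expect no substantive shortcut around it.
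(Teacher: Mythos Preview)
The paper does not prove Theorem~\ref{thm:decomposition}; it is quoted from \cite{daSilva2013Decomposition2-joins} and used as a black box. So there is no ``paper's own proof'' to compare against, and what you have written is not a proof but a high-level outline of the strategy in the cited reference.

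As an outline, your sketch is broadly accurate: even-hole-free graphs are indeed theta-free and prism-free by the parity counts you indicate, and the decomposition theorem in \cite{daSilva2013Decomposition2-joins} does proceed by classifying attachments to a fixed hole and then, in the absence of star cutsets and 2-joins, reconstructing the line-graph-of-a-bipartite-tree structure that underlies the extended nontrivial basic graphs. But you openly acknowledge that the ``main obstacle, and by far the bulk of the work'' is the final case analysis, and you do not carry any of it out. What you have is a plan, not an argument: the content of the theorem lies precisely in the ``long list of local rogue configurations'' that you defer. For the purposes of this paper that is fine, since Theorem~\ref{thm:decomposition} is imported and not re-proved; but as a self-contained proof your proposal has a genuine gap exactly where you say it does.
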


Let $G$ be an even-hole-free graph with no sector wheel. The main idea of our proof is to start with a ``precover'' of $G$, i.e. two sets $W_1, W_2 \subseteq V(G)$ such that $G[W_1]$ and $G[W_2]$ are chordal, and extend the precover to a chordal cover of $G$ by finding sets $X_1, X_2 \subseteq V(G)$ such that $W_1 \subseteq X_1$, $W_2 \subseteq X_2$, $X_1 \cup X_2 = V(G)$, and $G[X_1]$ and $G[X_2]$ are chordal. We define the ``precover'' using flat paths in $G$.

For a path $P = (v_1, \ldots, v_k)$, we write $N[P]$ for the set of vertices either in the path or with at least one neighbor in the path. We define the {\em interior} of $P$ to be $int(P) =\{ v_2, \ldots, v_{k-1}\}$. For $k \in \{1,2\}$ we set $int(P) = \emptyset$. 
We say that an induced path $P$
is {\em flat} if all the vertices in its interior have degree 2 in $G$. 
Note that every path with either 1 or 2 vertices is flat.

 We say that a graph $G$ is {\em flat path extendable} (FPE) if for every 
induced 
flat path $P$ and every two sets $W_1, W_2$ such that $G[W_1], G[W_2]$ are chordal, $W_1 \cap W_2 =V(P)$, and  $W_1 \cup W_2 = N[P]$, there exist sets $X_1 \supseteq W_1$ and $X_2 \supseteq W_2$ such that $G[X_1], G[X_2]$ are chordal, $X_1 \cap X_2 =V(P)$, and $X_1 \cup X_2 = V$. Under these conditions, $(P, W_1, W_2)$ is called a {\em precover}, and $(X_1, X_2)$ is a chordal cover of $G$ that {\em extends} $(P, W_1, W_2)$. 

If $G$ is not FPE but every proper induced subgraph of $G$ is FPE, then we say that $G$ is {\em minimal non flat path extendable} (MNFPE), and for a path $P$ not satisfying the above property (i.e., there exist two sets $W_1, W_2$, such that $G[W_1], G[W_2]$ are chordal and $W_1 \cap W_2 =V(P)$ and $W_1 \cup W_2 = N[P]$, but there do not exist sets $X_1 \supseteq W_1$ and $X_2 \supseteq W_2$, such that $G[X_1], G[X_2]$ are chordal and $X_1 \cap X_2 =V(P)$ and $X_1 \cup X_2 = V$) we say that $P$ is a {\em witness path} for $G$ and that $W_1,W_2$ are the corresponding {\em  witness sets}.

We prove the following theorem:

 \begin{thm}
\label{thm:main-nosc}
Every graph with no even hole, no sector wheel, and no star cutset is FPE.
\end{thm}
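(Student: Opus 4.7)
The plan is to proceed by induction on $|V(G)|$, taking a minimum counterexample $G$, which is therefore MNFPE with some witness path $P$ and corresponding witness sets $W_1, W_2$. Since $G$ is even-hole-free and has no star cutset, \Cref{thm:decomposition} forces $G$ to be a clique, a hole, a pyramid, an extended nontrivial basic graph, or a graph admitting a 2-join. The proof then splits into these five cases: the first four handled by direct structural analysis, and the fifth by induction.

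The clique case is immediate since every induced subgraph is chordal. For the hole case, since $G$ is even-hole-free, $G$ is an odd cycle; a flat path $P$ is then simply an arc, and the complement arc can be split at a well-chosen vertex so that the two sides extend the precover to chordal (path-like) subgraphs. For the pyramid case, the interior vertices of the three paths joining the apex to the base all have degree $2$, so flat paths live either inside these paths or extend through the apex/base; a finite case analysis on the position of $P$ and on how $N[P]$ is cut by $W_1,W_2$ yields the extension. For an extended nontrivial basic graph $B(T)$, I would translate the FPE property into a statement about paths in $T$, using the construction of $B(T)$ to see that flat paths correspond to subpaths of $T$ whose internal vertices have degree $2$; the no-sector-wheel hypothesis is used here to exclude configurations in which a vertex of $B(T)$ (say a $T$-edge incident to a high-degree internal vertex, or one of $x_1,x_2$) would create the kind of long-neighborhood obstruction that blocks a clean split.

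The main work is the 2-join case. Let $(A_1,C_1,B_1,A_2,C_2,B_2)$ be a 2-join of $G$ with marker paths $M_1,M_2$, and let $G_i$ be the graph obtained from $G[Z_i]$ by attaching the marker path $M_{3-i}$ along the complete bipartite connections. Each $G_i$ is a proper induced subgraph-like object of $G$, and is even-hole-free. I would first verify that $G_i$ inherits the hypotheses: any sector wheel in $G_i$ lifts to a sector wheel in $G$ because the neighborhood structure of $M_{3-i}$ mimics that of $Z_{3-i}$, and any star cutset of $G_i$ either pulls back to a star cutset of $G$ or is blocked by the $A$--$B$ complete bipartite connections. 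By minimality, each $G_i$ is FPE. Starting from the witness precover $(P,W_1,W_2)$ of $G$, the flat path $P$ must (since its interior is degree-$2$) lie entirely in one of $Z_1,Z_2$, say $Z_1$; I would then construct a precover of $G_1$ using $P$ and the restriction of $W_1,W_2$ together with the marker path data, and a precover of $G_2$ using a flat sub-path of $M_1$. Applying FPE in each block produces chordal covers, which are glued along the $A_1$--$A_2$ and $B_1$--$B_2$ bipartite edges to a chordal cover of $G$, contradicting that $G$ is MNFPE.

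The main obstacle will be this 2-join step: one must ensure that the chordal covers of $G_1$ and $G_2$ agree on which of $A_i,B_i$ goes to $X_1$ versus $X_2$, that no induced $4$-or-longer cycle is created across the join by the gluing (this is where the completeness of the $A_1A_2$ and $B_1B_2$ biparitions is both a help and a risk), and that the marker path $M_{3-i}$ can be chosen as a legitimate flat path in $G_i$. It is likely that a slightly stronger inductive statement than bare FPE---one that records how the covers treat the pairs $(A_i,B_i)$---will be needed to make the gluing work, and carefully choosing the marker paths and tracking the assignment of $A_i,B_i$ across the induction is where the bulk of the argument will lie.
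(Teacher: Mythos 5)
Your overall strategy is the same as the paper's: take a minimal counterexample, apply Theorem~\ref{thm:decomposition}, dispose of cliques, holes, pyramids, and extended nontrivial basic graphs directly, and do the real work in the 2-join case by covering the two blocks and gluing (this is Theorem~\ref{thm:MNFPE} together with Lemma~\ref{lem:2-join}; the fact that the blocks inherit the no-star-cutset hypothesis is exactly Theorem~\ref{thm:block-no-sc}, which you correctly flag as something to verify). But there are genuine gaps in the 2-join step. First, your claim that the witness path $P$ ``must lie entirely in one of $Z_1,Z_2$ since its interior is degree-$2$'' is false: the degree condition constrains only interior vertices of $P$, so $P$ can cross the join --- for instance $P$ could be a single edge $p_1p_2$ with $p_1\in A_1$, $p_2\in A_2$, and in general $P\cap(A_1\cup A_2)$ can be an edge of $P$ (Lemma~\ref{lem:flat-path-edge}). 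The paper does not exclude these configurations a priori; the cases $p_k\in Z_2$ and $P\not\subseteq Z_1$ each require a separate construction of a chordal cover in the proof of Lemma~\ref{lem:2-join} (steps \eqref{pk-in-Z1} and \eqref{P-in-Z1}), and your sketch silently assumes them away. Second, to invoke FPE of a block $G_i$ you need a precover in the technical sense: two chordal sets whose union is \emph{exactly} $N[P_i]$ and whose intersection is exactly $V(P_i)$. Restricting $W_1,W_2$ to $Z_i$ and adjoining marker-path data does not produce such a pair, since the union will in general fall short of $N[P_i]$. The paper needs Lemma~\ref{lem:construct-precoloring} (which itself uses the no-sector-wheel hypothesis in an essential way) to extend the partial precover to a full one; this step is absent from your outline and is not routine.

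Two further remarks. The ``stronger inductive statement'' you anticipate for tracking how covers treat $A_i,B_i$ is not needed: the paper achieves compatibility by placing the marker path $M_{3-i}$ inside the flat path of the block precover, so that it automatically lies in $X_1'\cap X_2'$, and then gluing via Lemmas~\ref{lem:helper} and~\ref{lem:helper2}; recognizing this trick removes your anticipated obstacle. Finally, a minimum counterexample to this theorem is \emph{not} MNFPE: its proper induced subgraphs that have star cutsets are not covered by the induction hypothesis. The correct minimality is ``minimal non-FPE among induced subgraphs with no star cutset,'' which is precisely why Theorem~\ref{thm:block-no-sc} is indispensable and why the basic-graph case must be argued with care (the paper's Lemma~\ref{lem:basic} route via nearly simplicial vertices and Lemma~\ref{cliqueplussingleton} is quite different from the translation to paths in $T$ that you sketch, and the no-sector-wheel hypothesis plays no role there).
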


To deal with the case when $G$ contains a star cutset, we define a closely related concept called weakly flat path extendable. A graph $G$ is {\em weakly flat path extendable} (weakly FPE) if for every path $P$ of length zero or one, and every two sets $W_1, W_2$ such that $G[W_1], G[W_2]$ are chordal, $W_1 \cap W_2 = V(P)$, and $W_1 \cup W_2 = N[P]$,  there exist sets $X_1 \supseteq W_1$ and $X_2 \supseteq W_2$ such that $G[X_1], G[X_2]$ are chordal, $X_1 \cap X_2 = V(P)$, and $X_1 \cup X_2 = V(G)$. Under these conditions, $(P, W_1, W_2)$ is a {\em precover} and $(X_1, X_2)$ is a chordal cover of $G$ that {\em extends} $(P, W_1, W_2)$. (The only difference between weakly flat path extendable and flat path extendable is that weakly flat path extendable only considers paths of length at most one). We note the following relationships between FPE and weakly FPE: 
\begin{itemize}
    \item If $G$ is FPE, then $G$ is weakly FPE. 
    \item If $G$ is minimal non-weakly FPE, then $G$ is not FPE, but $G$ is also not necessarily MNFPE. 
\end{itemize}

We prove: 

\begin{thm}
\label{thm:main}
Every graph with no even hole and no sector wheel is weakly FPE.
\end{thm}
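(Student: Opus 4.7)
The plan is to proceed by induction on $|V(G)|$, with the trivial base case being graphs on at most two vertices. For the inductive step, let $(P, W_1, W_2)$ be a precover of $G$ with $|V(P)| \leq 2$. If $G$ has no star cutset, then Theorem~\ref{thm:main-nosc} gives that $G$ is FPE, and since FPE implies weakly FPE the precover already extends. So assume that $G$ has a star cutset $S \subseteq N[v]$ with $v \in S$.

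Let $A_1,\ldots,A_k$ be the components of $G \setminus S$ and let $G_i = G[S \cup A_i]$. Each $G_i$ is a proper induced subgraph of $G$ and inherits even-hole-freeness and sector-wheel-freeness, so each $G_i$ is weakly FPE by induction. The overall strategy is to build compatible chordal covers of the pieces $G_i$ and then glue them along $S$. I would split into cases based on where $V(P)$ lies. In the main case, $V(P)$ together with $S$ lies in some single piece, say $G_1$ (this includes the subcase $V(P) \subseteq S$ after relabeling). I would apply the inductive hypothesis to $G_1$ with the restricted precover to obtain a chordal cover $(Y_1^1, Y_2^1)$ of $G_1$ extending $(P, W_1, W_2)$. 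This induces a partition $S = (Y_1^1 \cap S) \cup (Y_2^1 \cap S)$ whose intersection equals $V(P) \cap S$. For each $i \geq 2$, I would construct a precover of $G_i$ realizing the same partition of $S$, using a short path inside $S$ (for instance through the dominating vertex $v$) as the witness path, apply induction to $G_i$ to obtain $(Y_1^i, Y_2^i)$, and finally set $X_j = \bigcup_i Y_j^i$ for $j = 1,2$.

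The main obstacle is twofold. First, I must verify that the precovers chosen for each $G_i$ with $i \geq 2$ are legitimate weakly-FPE precovers: the prescribed subsets of $S$ must induce chordal subgraphs, their intersection must be exactly the chosen witness path, and their union must equal the closed neighborhood of that path inside $G_i$. Here the star-cutset structure $S \subseteq N[v]$ is what makes the construction feasible, since the dominating vertex $v$ provides canonical short witness paths realizing essentially any reasonable partition of $S$, and chordality within $N[v]$ is easy to control because $v$ sees all other vertices of $S$. Second, I must show that the glued sets $G[X_1]$ and $G[X_2]$ are chordal. Any induced cycle of length at least $4$ in $G[X_j]$ spanning more than one $A_i$ would have to traverse $S$ at least twice; I expect to exclude this using even-hole-freeness together with the sector-wheel hypothesis, since such a traversal tends to force either an induced even cycle or a configuration in which $v$ becomes the hub of a sector wheel on the cycle. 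This last step is where I anticipate the delicate work, and I would likely need to pay special attention to the position of $v$ (whether $v$ lies in $W_1$, $W_2$, or $V(P)$) to make the case analysis go through uniformly across all components.
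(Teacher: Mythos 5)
Your first step (if $G$ has no star cutset, apply Theorem~\ref{thm:main-nosc} and use that FPE implies weakly FPE) matches the paper. The gap is in the star cutset case: the claim that the glued sets $X_j=\bigcup_i Y_j^i$ are chordal is precisely the hard content of the paper, and the argument you sketch for it does not go through for an arbitrary star cutset $S\subseteq N[v]$. A hole $H$ in $G[X_1]$ that meets two components $A_i, A_{i'}$ need not produce an even hole or a sector wheel: if $v\notin V(H)$, then $(H,v)$ is a wheel only when $|N(v)\cap H|\geq 3$, and even then it can be a \emph{short pyramid} (an edge plus an isolated vertex of attachment), which is excluded neither by even-hole-freeness nor by sector-wheel-freeness; and if $(H,v)$ is a proper wheel, Lemma~\ref{lem:proper-wheel} only yields another star cutset centered at $v$, which contradicts nothing in your setup. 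Moreover, when $S$ is not the full closed neighborhood $N[v]$, a component $A_i$ may contain neighbors of $v$, so a crossing path $Q\subseteq A_i$ with ends in $S$ need not close up with $v$ into a hole living inside the single piece $Y_1^i$ --- which is the one mechanism that makes this kind of gluing work. The chordality of each $Y_j^i$ simply does not control holes that weave through $S$ and several components.

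The paper avoids gluing across general star cutsets altogether. It shows a minimal non-weakly-FPE graph has no star cutset in three stages: (a) no clique cutset, by an easy gluing (Lemma~\ref{lem:no-clique-cutset}); (b) no \emph{full} star cutset (Lemma~\ref{lem:no-full-star-cutset}) --- here gluing along $N[v]$ does work, because the cutset equals $N[v]$, so $v$ itself serves as a one-vertex witness path for the piece containing all of $N[v]$ and every crossing path closes into a hole with $v$ inside one piece; but this only handles the subcase where some component misses a neighbor of $v_0$ or $w_0$, and the remaining subcase requires the delicate analysis with the holes $H_1,\dots,H_5$, short pyramids, and even wheels; and (c) any non-full star cutset forces a clique cutset (Theorem~\ref{thm:star-cutset}), using sector-wheel-freeness to show $N(C)\cap N(A)$ is a clique. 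To repair your proof you would essentially have to reproduce steps (b) and (c); the uniform ``decompose along $S$ and glue'' plan cannot be completed as stated.
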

Theorem \ref{thm:main-nosc} (and the stronger definition of FPE) is needed to prove Theorem \ref{thm:main} in the case when $G$ does not contain a proper star cutset.

Theorem \ref{thm:main} implies Theorem \ref{thm:overall-thm}:

\begin{proof}[Proof of Theorem \ref{thm:overall-thm}]
Let $G$ be an even-hole-free graph with no sector wheel. Let $v \in V(G)$. Since $G$ has no sector wheel, it follows that $N[v]$ is chordal. Now, $(\{v\}, N[v], \{v\})$ is a precover of $G$. By Theorem \ref{thm:main}, $G$ is weakly FPE, so $G$ admits a chordal cover. This completes the proof. 
\end{proof}

Theorem \ref{thm:main} is not true without the assumption that the graph has no sector wheel. Indeed, consider the graph $G$ depicted in Figure \ref{fig:ex}. Let $P = \{x\}$ and let $W_1=\{x,y_1,y_3,y_5\}$ and $W_2=\{x,y_2,y_4,y_6\}$. Then there are no $X_1, X_2$   such that $X_1 \cup X_2 = V(G)$,  $W_1 \subseteq X_1$, $W_2 \subseteq X_2$, and  $G[X_1], G[X_2]$ are chordal. Therefore the method in this paper cannot be extended to the case of graphs containing sector wheels without some new ideas.

\begin{figure}[ht]
    \centering

    \begin{tikzpicture}[scale=.5]
    \coordinate (a) at (-6,0);
    \coordinate (b) at (-4,0);
    \coordinate (c) at (-1,0);
    \coordinate (d) at (1,0);
    \coordinate (e) at (4,0);
    \coordinate (f) at (6,0);
    \coordinate (g) at (0,3);
    \coordinate (h) at (0,-2.5);
    \coordinate (i) at (-1.5,-4);
    \coordinate (j) at (1.5,-4);

     \fill[black, draw=black, thick] (a) circle (3pt) node[black, left ] {$y_1$};
    \fill[black, draw=black, thick] (b) circle (3pt) node[black, right ] {$y_2$};
    \fill[black, draw=black, thick] (c) circle (3pt) node[black, left ] {$y_3$};
    \fill[black, draw=black, thick] (d) circle (3pt) node[black, right] {$y_4$};
    \fill[black, draw=black, thick] (e) circle (3pt) node[black, left] {$y_5$};
        \fill[black, draw=black, thick] (f)  circle (3pt) node[black, right] {$y_6$};
            \fill[black, draw=black, thick] (g) circle (3pt) node[black, above] {$x$};
                \fill[black, draw=black, thick] (h) circle (3pt) node[black, below] {$z_2$};
        \fill[black, draw=black, thick] (i)  circle (3pt) node[black, below] {$z_1$};
            \fill[black, draw=black, thick] (j) circle (3pt) node[black, below] {$z_3$};

 \draw (g) -- (a);
    \draw (g) -- (b);
    \draw (g) -- (c);
    \draw (g) -- (d);
    \draw (g) -- (e);
    \draw (g) -- (f);
    \draw (a) -- (b);
    \draw (c) -- (d);
    \draw (e) -- (f);
    \draw (a) -- (i);
    \draw (b) -- (i);
    \draw (c) -- (h);
    \draw (d) -- (h);    
    \draw (e) -- (j);
    \draw (f) -- (j); 
    \draw (i) -- (j);
    \draw (i) -- (h);       
    \draw (j) -- (h);      
    \end{tikzpicture}
\label{fig:ex}
\caption{A graph with no even hole which is not FPE.}
\end{figure}
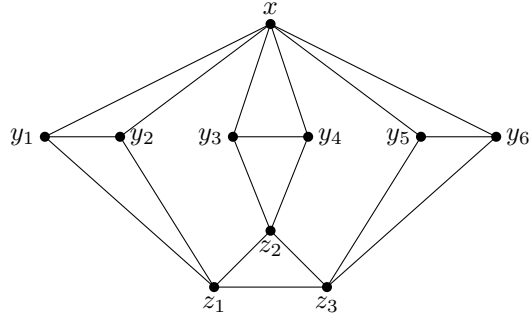

\subsection{Organization of the paper}

In Section \ref{sec:basic}, we prove that if $G$ is an extended nontrivial basic graph, then $G$ is FPE. In Section \ref{sec:2-join}, we prove that if $G$ has no clique cutset and no star cutset, then $G$ is FPE. In Section \ref{sec:clique}, we prove that if $G$ admits a clique cutset, then $G$ is weakly FPE.  In Section \ref{sec:star-cutset}, we prove that if $G$ admits a proper star cutset, then $G$ is weakly FPE. Finally, in Section \ref{sec:final}, we prove Theorem \ref{thm:main}.

\section{Basic graphs} 
\label{sec:basic}
In this section, we prove that extended nontrivial basic graphs with no star cutsets are FPE.

A vertex $v$ in a graph $G$ is  {\em nearly simplicial} if $N(v)$ is the union of a clique and a singleton. First we show that every nearly simplicial vertex in an MNFPE graph $G$ is contained in the neighborhood of every witness path for $G$.  

\begin{lem}
\label{cliqueplussingleton}
    Let $G$ be MNFPE and let $P$ be a witness path for $G$. Then all nearly simplicial vertices of $G$ are in $N[P]$. 
\end{lem}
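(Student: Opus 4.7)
The plan is to argue by contradiction: suppose some nearly simplicial vertex $v\in V(G)$ satisfies $v\notin N[P]$, and work inside the strictly smaller graph $G'=G\setminus\{v\}$, which is FPE by the MNFPE hypothesis. The first step is to verify that $(P,W_1,W_2)$ is still a precover of $G'$. Since $v\notin N[P]$, the vertex $v$ is not on $P$ and has no neighbor on $P$, so the interior vertices of $P$ still have degree $2$ in $G'$ and $P$ remains flat. Moreover $N_{G'}[P]=N_G[P]$, $V(P)$ is unchanged, and $W_1,W_2\subseteq N[P]$ do not contain $v$; so $W_1\cap W_2=V(P)$ and $W_1\cup W_2=N_{G'}[P]$ continue to hold. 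Applying FPE to $G'$ produces an extension $(X_1',X_2')$ of this precover with $G'[X_1'],G'[X_2']$ chordal, $X_1'\cap X_2'=V(P)$, and $X_1'\cup X_2'=V(G)\setminus\{v\}$.

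The second step is to insert $v$ into one of $X_1',X_2'$ while preserving chordality; this would yield a chordal cover of $G$ extending $(P,W_1,W_2)$, contradicting the witness property of $P$. Writing $N(v)=K\cup\{u\}$ with $K$ a clique, the key observation is that $u\notin V(P)$: otherwise $v$ would have a neighbor on $P$, contradicting $v\notin N[P]$. Hence $u$ lies in at most one of the two sides, say $u\in X_1'\setminus X_2'$ (the case $u\in K$, in which $N(v)$ is itself a clique, allows either side). Placing $v$ into the side that avoids $u$, namely $X_2'$, makes $N(v)\cap X_2'\subseteq K$, which is a clique; since any hole of length at least $4$ through $v$ would require two non-adjacent neighbors of $v$, this placement keeps $G[X_2'\cup\{v\}]$ chordal. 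Then $X_1=X_1'$ and $X_2=X_2'\cup\{v\}$ form a chordal cover of $G$ extending $(P,W_1,W_2)$.

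I do not anticipate a real obstacle here: the argument is a one-vertex deletion plus an exchange/placement step. The only point that needs care is the verification that deleting $v$ preserves the precover, and that is handed to us precisely by the assumption $v\notin N[P]$; the chordality of the enlarged side then follows immediately from the clique structure of $N(v)\setminus\{u\}$.
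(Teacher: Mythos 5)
Your proposal is correct and follows essentially the same route as the paper's proof: delete the nearly simplicial vertex, apply FPE to the smaller graph to extend the (unchanged) precover, and then place the deleted vertex on the side avoiding its singleton neighbor so that its neighborhood there is a clique. The extra verifications you supply (that the precover survives the deletion, and that $u\notin V(P)$ so $u$ lies in exactly one side) are details the paper leaves implicit, but the argument is the same.
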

\begin{proof}
Let $W_1$ and $W_2$ be the witness sets for $P$. Suppose there exists a nearly simplicial vertex $u \in V(G) \setminus N[P]$. Let $G' = G \setminus \{u\}$. Since $G$ is MNFPE, it follows that $G'$ is FPE and $N[P] \subseteq G'$. Let $(X_1, X_2)$ be a chordal cover of $G'$ that extends $(P, W_1, W_2)$. Assume $N(u) = C \cup \{u'\}$ where $C$ is a clique. Let $i \in \{1, 2\}$ such that $u' \in X_i$. Let $X_i' = X_i$ and $X_{3-i}' = X_{3-i} \cup \{u\}$. Now, $(X_1', X_2')$ is a chordal cover of $G$ that extends $(P, W_1, W_2)$, contradicting that $G$ is MNFPE. 
\end{proof}

\begin{lem}
Let $G = B(T)$ be an extended nontrivial basic graph for some tree $T$. If there are two leaves of $T$ with a common neighbor, then $B(T)$ has
a star cutset.
\end{lem}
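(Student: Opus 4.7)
The plan is to exhibit an explicit star cutset centered at an edge-vertex of $B(T)$. Let $u_1, u_2$ be two leaves of $T$ sharing the neighbor $w$, and set $e_1 = e(u_1)$, $e_2 = e(u_2)$. Since $u_1$ and $u_2$ have a common neighbor in the bipartite tree $T$, they lie on the same side of the bipartition, so $u_1, u_2 \in L_i$ for some $i \in \{1,2\}$.

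I would first compute $N[e_1]$ in $B(T)$ explicitly. Because $u_1$ is a leaf of $T$, the only edge of $T$ incident to $u_1$ is $e_1$ itself, so every edge-vertex adjacent to $e_1$ in $B(T)$ must share the endpoint $w$ with it. Combining this with the rule for the vertices $x_1, x_2$, one gets $N[e_1] = \{e_1\} \cup (\{e \in E(T) : w \in e\} \setminus \{e_1\}) \cup \{x_i\}$. By symmetry the same description holds for $N[e_2]$; in particular $e_1$ and $e_2$ are true twins in $B(T)$, and $e_2 \in N(e_1)$.

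I would then set $C := N[e_1] \setminus \{e_2\}$. Plainly $e_1 \in C$ and $C \subseteq N[e_1]$, so $C$ is a candidate star cutset centered at $e_1$. To see that it works, observe first that in $B(T) \setminus C$ the vertex $e_2$ is isolated, because every neighbor of $e_2$ lies in $N[e_1] \setminus \{e_2\} = C$. Next, $x_{3-i}$ belongs to $B(T) \setminus C$, since $x_{3-i} \notin N[e_1]$: it is not an edge of $T$ at all, it is not $x_i$, and it is not adjacent to $e_1$ because $u_1 \notin L_{3-i}$. Finally, $B(T)$ itself is connected (its edge-vertices induce the line graph of the connected graph $T$, and $x_1, x_2$ attach to this via each other and via leaf edges). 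Therefore $C$ is a star cutset of $B(T)$.

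The argument is essentially the twin observation: two leaves sharing a neighbor yield true twins $e_1, e_2$ in $B(T)$, and removing all neighbors of $e_1$ except $e_2$ itself leaves $e_2$ isolated while keeping $x_{3-i}$ in the complement. I do not anticipate any substantive obstacle; the entire proof is a short direct construction once the twin structure is noticed.
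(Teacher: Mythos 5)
Your proof is correct and follows essentially the same route as the paper's: both identify that the two leaf-edges $e_1,e_2$ at the common neighbor $w$ satisfy $N[e_1]=N[e_2]$ in $B(T)$ and then take $N[e_1]\setminus\{e_2\}$ as the star cutset isolating $e_2$. Your explicit observation that $x_{3-i}\notin N[e_1]$ (so the complement of the cutset is not just $\{e_2\}$) is a small point the paper leaves implicit, but the argument is the same.
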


\begin{proof}
    Let $t_1$ and $t_2$ be two leaves of $T$ with a common neighbor $u$, and let $\ell_1$ and $\ell_2$ be the vertices of $B(T)$ corresponding to the edges $\{u,t_1\}$ and $\{u,t_2\}$, respectively. Up to symmetry between $x_1$ and $x_2$, assume that $x_1$ is adjacent to $\ell_1$. Since $t_1$ and $t_2$ have distance $2$, which is an even number, it follows that $x_1$ is adjacent to $\ell_2$, and that $\{\ell_1, \ell_2\}$ is anticomplete to $x_2$. Then, the neighborhood of $\ell_i$ for $i = 1, 2$ consists of $x_1$ and the vertices of $B(T)$ corresponding to exactly the edges incident with the common neighbor of $t_1$ and $t_2$. In particular, $N_{B(T)}[\ell_1] = N_{B(T)}[\ell_2]$. 


Now, $N_{B(T)}[\ell_1] \setminus \{\ell_2\}$ is a star cutset that separates $\ell_2$ from $B(T) \setminus N_{B(T)}[\ell_1]$. 
\end{proof}


Since we deal with graphs with clique cutsets and proper star cutset separately in Sections \ref{sec:clique} and \ref{sec:star-cutset}, respectively, we may assume here that there are no two leaves in $T$ with a common neighbor.
   
\begin{lem}
\label{lem:basic}
Let $T$ be a tree, let $G = B(T)$ be an extended nontrivial basic graph, and assume that $B(T)$ has no star cutset. 
    Then $B(T)$ is not MNFPE.
\end{lem}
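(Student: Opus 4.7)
The plan is to assume for contradiction that $B(T)$ is MNFPE with some witness path $P$, and then to exhibit a nearly simplicial vertex of $B(T)$ that does not lie in $N[P]$, contradicting Lemma \ref{cliqueplussingleton}. As a preliminary step, I would catalog the nearly simplicial vertices of $B(T)$: (i) every pendant edge $e(t)$ of $T$ is nearly simplicial, by an argument parallel to the one used in the preceding lemma and again relying on the no-two-leaves-common-neighbor hypothesis to prevent $x_i$ (where $t\in L_i$) from being adjacent to any other edge at the non-leaf end of $e(t)$; (ii) each $x_i$ with $|L_i|\le 1$ is nearly simplicial, since then $N(x_i)$ has at most two mutually non-adjacent vertices; and (iii) every non-pendant edge $\{u,v\}\in E(T)$ with $\min(d_T(u),d_T(v))=2$ is nearly simplicial, because its $B(T)$-neighborhood is the disjoint union of two cliques, one of which is then a singleton. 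A short degree-sum argument using the no-common-neighbor hypothesis and $|L|\ge 3$ shows that $T$ must contain a non-leaf of degree $2$, so witnesses of type (iii) always exist.

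Next, since $P$ is flat, every interior vertex of $P$ has degree $2$ in $B(T)$, so $N[P]=V(P)\cup N(p_0)\cup N(p_k)$ where $p_0,p_k$ are the endpoints of $P$. Setting $E_P:=V(P)\cap E(T)$ and $U_P:=\{v\in V(T) : v \text{ is incident to some edge of } E_P\}$, a leaf $t\in L_i$ satisfies $e(t)\in N[P]$ if and only if either the unique non-leaf neighbor of $t$ lies in $U_P$, or $x_i\in V(P)$. Moreover, the edges of $E_P$ form a union of induced paths in $T$ with the $x_i$'s in $V(P)$ acting as optional bridges, and any $x_i$ lying in the interior of $P$ must have $B(T)$-degree $2$, which forces $|L_i|=1$.

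The contradiction is then extracted by case analysis on $V(P)\cap\{x_1,x_2\}$. When this intersection is empty and $|V(P)|\ge 3$, the degree-$2$ condition on interior vertices of $P$ forces every internal vertex of the associated induced $T$-path $v_0 v_1\cdots v_{k+1}$ to have $T$-degree $2$; by the no-common-neighbor hypothesis only the two endpoints of this $T$-path can contribute covered leaves, yielding at most two covered leaves and contradicting $|L|\ge 3$. When $|V(P)|\le 2$ (still with no $x_i\in V(P)$) the analogous counting weakens to at most three covered leaves, which already suffices for $|L|\ge 4$; the sharp remaining case $|V(P)|=2$ and $|L|=3$ forces $T$ to be a subdivided spider with arm lengths $1,2,2$, and the bipartition of $T$ then makes one of $x_1,x_2$ nearly simplicial but outside $N[P]$. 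The cases $x_1\in V(P)$ or $x_2\in V(P)$ are handled analogously: the presence of $x_i$ in $V(P)$ absorbs every side-$i$ pendant edge, but the remainder of $P$ is still structured enough that either a pendant edge on the opposite side (when $|L_{3-i}|\ge 2$), or a type-(iii) non-pendant nearly simplicial edge located away from $P$ (when $|L_{3-i}|\le 1$, using the fact that $T$ then contains a degree-$2$ non-leaf off $U_P$), must be missed.

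The principal obstacle will be the tight case $|L|=3$ with $|V(P)|\le 2$, where the pendant-edge count alone is not sufficient to force an uncovered nearly simplicial vertex, and one must combine the type~(i) witnesses with the type~(ii) or type~(iii) witnesses while keeping careful track of the bipartition of $T$.
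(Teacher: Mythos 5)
Your overall strategy is the same as the paper's: assume $B(T)$ is MNFPE with witness path $P$ and contradict Lemma~\ref{cliqueplussingleton} by exhibiting a nearly simplicial vertex outside $N[P]$. Your catalogue of nearly simplicial vertices (pendant edges, $x_i$ with $|L_i|\le 1$, non-pendant edges with a degree-$2$ endpoint) is correct, and your leaf-counting argument in the case $V(P)\cap\{x_1,x_2\}=\emptyset$ is sound. The paper organizes the casework differently: it fixes a longest path $(t_1,\dots,t_k)$ of $T$, uses only the four nearly simplicial edges near its two ends, and shows by induction that $V(P)$ must contain the entire middle of that path, forcing all internal vertices of $T$ to have degree $2$ and hence $T$ to be a path. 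That device avoids most of your case explosion.

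However, your plan has a genuine gap: you implicitly assume the contradiction always comes from an \emph{uncovered} nearly simplicial vertex, and this is false. Take $T$ with a single degree-$3$ vertex $c$ and three legs $c\dd u\dd t_1$, $c\dd a\dd s_1$, $c\dd s_2$ (leaves $t_1,s_1,s_2$ have distinct neighbors, and one can check $B(T)$ has no star cutset), and take $P = x_1\dd\{t_1,u\}\dd\{u,c\}$, which is an induced flat path since $\{t_1,u\}$ has degree $2$ in $B(T)$. Here $x_1\in V(P)$ and $|L_2|=|\{s_2\}|=1$, so your plan asserts that some type-(iii) nearly simplicial edge away from $P$ is missed. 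But the only type-(iii) edge off $V(P)$ is $\{c,a\}$, which is adjacent to the endpoint $\{u,c\}$ of $P$, and $x_2$ is adjacent to $x_1$; in fact $N[P]=V(B(T))$, so \emph{every} vertex is covered and no witness of any type exists. The needed contradiction in such configurations is of a different kind: $N[P]=V(G)$ forces $W_1\cup W_2=V(G)$, so $(W_1,W_2)$ is already a chordal cover extending itself and $P$ cannot be a witness path. The paper makes exactly this observation in its $k=5$ case (subdivided stars), which is precisely where your example lives; your proposal never states it, and your sub-case ``$x_i\in V(P)$ and $|L_{3-i}|\le 1$'' is wrong as written without it. Relatedly, even when a degree-$2$ non-leaf exists off $U_P$, the corresponding type-(iii) edge can still land in $N(p_0)\cup N(p_k)$, so ``located away from $P$ in $T$'' does not imply ``outside $N[P]$ in $B(T)$''; that step needs a real argument or the $N[P]=V(G)$ escape hatch.
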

\begin{proof}
We assume for contradiction that there exists a witness path $P$ for $B(T)$.
If $T$ is a path then $B(T)$ is a cycle and therefore is clearly FPE. It follows that at least one of $x_i$ has degree at least 3, so not both  $x_1,x_2$ are internal in $P$.

Let $(t_1, t_2, \ldots, t_k)$ be the longest path in $T$. Assuming $T$ is not a path and there are no two leaves of $T$ with a common neighbor, we must have $k \geq 5$ and $d(t_2) = d(t_{k-1}) = 2$. Note that if $x_i$ is internal in $P$ then $x_{3-i}$ is in $P$.
Indeed, suppose $x_1$ is internal in $P$. Then, by definition of a witness path, $x_1$ has degree 2, and its two neighbors are also in $P$, implying $x_2 \in P$.

Suppose $k=5$. Then $T$ is a subdivision of a star, with $t_3$ being its center. 
In this case, all the edges of $T$ are nearly simplicial in $B(T)$, and therefore, by Lemma \ref{cliqueplussingleton}, all of them are in $N[P]$, as vertices in $B(T)$.
Moreover, $P$ must contain at least one of the vertices $x_1$ and $x_2$,
for otherwise, there is some nearly simplicial vertex not in $N[P]$, contradicting Lemma \ref{cliqueplussingleton}.
Thus $N[P] = V(B(T))$. 
This is impossible, since we have $W_1\cup W_2 = N[P]$. 
So from now on we assume $k>5$.

Since no two leaves of $T$ have a common neighbor, by Lemma \ref{cliqueplussingleton}, we  have that $\{t_1, t_2\}$, $\{t_2, t_3\}$, $\{t_{k-2}, t_{k-1}\}$, and  $\{t_{k-1}, t_k\}$ are all in $N[P]$ since they are nearly simplicial.
Since $\{t_2, t_3\} \in N[P]$, some edge of $T$ that is incident with either $t_2$ or $t_3$ must be in $V(P)$. Similarly, since      $\{t_{k-2}, t_{k-1}\} \in N[P]$, some edge of $T$ that is incident with  either $t_{k-2}$ or $t_{k-1}$ must be in $V(P)$. Since  not both  $x_1,x_2$ are internal in $P$,
this implies, by induction, that $\{t_3,t_4\}, \ldots, \{t_{k-3},t_{k-2}\} \in V(P)$.
Since $\{t_1, t_2\} \in N[P]$, we must have 
$\{t_2, t_3\} \in V(P)$ and similarly, since $\{t_{k-1}, t_k\} \in N[P]$, we must have 
$\{t_{k-2}, t_{k-1}\} \in V(P)$. This implies
$\{t_3,t_4\}, \ldots, \{t_{k-3},t_{k-2}\} \in int(P)$ and hence $d(t_3) = \ldots = d(t_{k-3}) = 2$. We conclude that $T$ is a path, which yields a contradiction as discussed above.
\end{proof}

\section{Graphs with no star cutset}
\label{sec:2-join}

In this section, we prove that every (even hole, sector wheel)-free graph with no star cutset is FPE. We first focus on the case when $G$ admits a 2-join. 
If $G$ admits a 2-join $(A_1, C_1, B_1, A_2, C_2, B_2)$, we denote by  
  $B(Z_i)$  the graph formed  by adding to $Z_i=A_i \cup C_i \cup B_i$ the  marker path $M_{3-i}$.
We call $B(Z_1)$ and $B(Z_2)$ the {\em blocks of decomposition}
of the 2-join $(A_1, C_1, B_1, A_2, C_2, B_2)$. We need the following theorem from \cite{daSilva2013Decomposition2-joins}: 
\begin{thm}[\cite{daSilva2013Decomposition2-joins}, Theorem 2.10]
\label{thm:block-no-sc}
    If $G$ is even-hole-free and has no star cutset, then $B(Z_1)$ and $B(Z_2)$ have no star cutset. 
\end{thm}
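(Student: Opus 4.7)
My plan is a proof by contradiction, working with $B(Z_1)$ (the case of $B(Z_2)$ is symmetric). Suppose $B(Z_1)$ admits a star cutset $S$ centered at a vertex $v$; I will construct a star cutset of $G$ centered at $v$, contradicting the hypothesis. The key starting observation is that $B(Z_1)$ is precisely the induced subgraph $G[Z_1 \cup V(M_2)]$, since all of its edges (internal edges of $G[Z_1]$, path edges of $M_2$, and the cross-edges between $A_1$ and $a$ and between $B_1$ and $b$) are also edges of $G$. Consequently $S \subseteq N_{B(Z_1)}[v] \subseteq N_G[v]$, so to produce a star cutset of $G$ centered at $v$ it suffices to augment $S$ to a set $S^{\ast}$ with $S \subseteq S^{\ast} \subseteq N_G[v]$ such that $G \setminus S^{\ast}$ is disconnected.

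The natural augmentation is $S^{\ast} = S \cup (N_G(v) \setminus V(B(Z_1)))$. Concretely this adds $A_2 \setminus V(M_2)$ if $v \in A_1$, $B_2 \setminus V(M_2)$ if $v \in B_1$, nothing if $v \in C_1$, and $N_G(v) \cap (Z_2 \setminus V(M_2))$ if $v \in V(M_2)$. The verification that $G \setminus S^{\ast}$ is disconnected splits on the location of $v$. In the easiest case $v \in C_1$, we have $S^{\ast} = S \subseteq Z_1$ and $S \cap V(M_2) = \emptyset$; so the marker path $M_2$ is intact in $B(Z_1) \setminus S$, which forces $A_1 \setminus S$ and $B_1 \setminus S$ into a common component, and any other component of $B(Z_1) \setminus S$ must be contained in $C_1$ and therefore has no cross-edges to $Z_2$, so it remains a separate component in $G \setminus S$. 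In the case $v \in A_1$ (symmetrically $v \in B_1$), the crucial structural fact is that $b \notin S$, since $b$'s neighbors in $B(Z_1)$ all lie in $B_1 \cup V(M_2)$, which is disjoint from $A_1 \ni v$; hence $B_1 \setminus S$ lies in a single component $K_b$ of $B(Z_1) \setminus S$, and adding $A_2 \setminus V(M_2)$ to $S$ severs the opposite gateway, so the $Z_2$-vertices restored in $G \setminus S^{\ast}$ can only attach to $K_b$ and cannot bridge any two distinct components of $B(Z_1) \setminus S$.

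The main obstacle is the case $v \in V(M_2)$, where $v$ lies on the marker path itself and the constraint $S \subseteq N_{B(Z_1)}[v]$ interacts subtly with the path structure. If $v$ is an interior vertex of $M_2$ then it has degree $2$ in $B(Z_1)$, so $|S| \le 3$ and $S \subseteq V(M_2)$, and one must show that such $S$ cannot actually disconnect $B(Z_1)$ because the $Z_1$-part is unaffected and still reaches both sides of $M_2 \setminus S$ via the endpoints $a, b \notin S$. If $v$ is an endpoint, say $v = a$, one uses the defining property of the marker path (that it faithfully encodes $Z_2$-connectivity between $A_2$ and $B_2$) together with even-hole-freeness to argue that any path in $G \setminus S^{\ast}$ through $Z_2 \setminus V(M_2)$ bridging two components of $B(Z_1) \setminus S$ could be replaced by a path through $V(M_2) \setminus S^{\ast}$, contradicting the original disconnection.
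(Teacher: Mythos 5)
First, note that the paper gives no proof of this statement: it is imported verbatim as Theorem~2.10 of \cite{daSilva2013Decomposition2-joins}, so there is no internal argument to compare yours against, and your attempt has to stand on its own. The framework is sound as far as it goes: $B(Z_1)$ is indeed the induced subgraph $G[Z_1\cup V(M_2)]$, and your translations of a star cutset $S\subseteq N_{B(Z_1)}[v]$ for $v\in C_1$ and for $v\in A_1\cup B_1$ are correct (in the latter case the key points that $b_2\notin S$ and that the restored part of $Z_2$ can only attach to the component containing $B_1\cup(M_2\setminus S)$ both check out).

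The gap is exactly where you flag ``the main obstacle,'' and it is not closed. For $v$ an interior vertex of $M_2$, your conclusion that $S$ (which is then a set of at most three consecutive marker vertices, possibly including an endpoint of $M_2$) cannot disconnect $B(Z_1)$ silently requires $G[Z_1]$ to be connected; this is not a consequence of the 2-join axioms and itself needs an argument from the no-star-cutset hypothesis on $G$. For $v=a_2$ the situation is worse: if $a_2\in S$, then $B(Z_1)\setminus S$ can have two components each meeting $A_1$, and every vertex of $A_2\setminus N_G[a_2]$ is complete to $A_1$ and so reconnects them in $G\setminus S^{\ast}$; such vertices need not exist in $N_G[a_2]$, so you cannot add them to $S^{\ast}$ without losing the star property. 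Your proposed repair --- rerouting a bridging path of $Z_2\setminus V(M_2)$ onto the marker path --- fails precisely here, because any such reroute would have to enter $M_2$ at $a_2$, which has been deleted. Closing this case is the real content of the theorem and in \cite{daSilva2013Decomposition2-joins} it rests on auxiliary structural lemmas about 2-joins in even-hole-free graphs with no star cutset (connectivity of the sides, and the fact that all $A_2$--$B_2$ paths through $C_2$ have the same parity as $M_2$, which is where even-hole-freeness enters). As written, your proof never actually uses even-hole-freeness except as an unexecuted appeal, which is a warning sign that the hard case has not been engaged.
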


The next lemma states how paths and holes interact with the structure of a 2-join. 
\begin{lem}
\label{lem:small-intersection}
   Let $G$ be a graph and let $(A_1, C_1, B_1, A_2, C_2, B_2)$ be a 2-join of $G$. Let $Q$ be a path or a hole of $G$. If $Q \cap A_1 \neq \emptyset$ and $Q \cap A_2 \neq \emptyset$, then $|Q \cap (A_1 \cup A_2)| \leq 3$. Similarly, if $Q \cap B_1 \neq \emptyset$ and $Q \cap B_2 \neq \emptyset$, then $|Q \cap (B_1 \cup B_2)| \leq 3$. 
\end{lem}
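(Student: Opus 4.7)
The plan is to leverage the defining property of a 2-join that $A_1$ is complete to $A_2$: every cross-edge between $Q \cap A_1$ and $Q \cap A_2$ must be an edge of $Q$ itself, because $Q$ is induced. I would argue by contradiction, assuming $|Q \cap (A_1 \cup A_2)| \geq 4$ and splitting into two cases based on the sizes of $Q \cap A_1$ and $Q \cap A_2$.

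The easy case is when one of the two intersections, say $Q \cap A_1$, has size exactly $1$, which forces $|Q \cap A_2| \geq 3$. Writing $Q \cap A_1 = \{a_1\}$, completeness of $A_1$ to $A_2$ forces $a_1$ to be adjacent to every vertex of $Q \cap A_2$, producing at least three neighbors of $a_1$ inside $V(Q)$. Since $Q$ is an induced path or hole, every vertex has degree at most $2$ in $Q$, which is an immediate contradiction. The symmetric situation, with the roles of $A_1$ and $A_2$ exchanged, is handled identically.

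The more delicate case is when $|Q \cap A_1| \geq 2$ and $|Q \cap A_2| \geq 2$. Choose $a_1, a_1' \in Q \cap A_1$ and $a_2, a_2' \in Q \cap A_2$. By the completeness condition, all four edges $a_1a_2$, $a_1a_2'$, $a_1'a_2$, $a_1'a_2'$ are in $G$, and hence in $Q$ since $Q$ is induced. Thus $G[\{a_1,a_1',a_2,a_2'\}] \supseteq K_{2,2}$, and as no further edges can be added without violating the induced structure of $Q$ (each of the four vertices would otherwise exceed degree $2$), the four vertices form an induced $C_4$. This is an even hole, contradicting the even-hole-free ambient hypothesis of the section. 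Combining the two cases yields $|Q \cap (A_1 \cup A_2)| \leq 3$, and the analogous statement for $B_1, B_2$ follows by the symmetric argument using that $B_1$ is complete to $B_2$.

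The only real subtlety I anticipate is making explicit the use of even-hole-freeness in the $C_4$ subcase; otherwise the statement would fail for a graph containing an induced $C_4$ straddling a 2-join with two vertices in $A_1$ and two in $A_2$. Since this section assumes $G$ is even-hole-free, this is automatic, but it is worth flagging that the lemma is being applied only in that context.
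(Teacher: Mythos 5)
Your proof is correct and follows essentially the same route as the paper's: the $K_{1,3}$/degree argument when one side meets $Q$ in a single vertex, and the induced-$C_4$-as-even-hole contradiction when both sides meet $Q$ in at least two vertices. You are also right to flag that even-hole-freeness is used although it is absent from the lemma's hypotheses; the paper's own proof relies on it in exactly the same way.
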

\begin{proof}
Suppose first that $|Q \cap A_1| \geq 2$ and $|Q \cap A_2| \geq 2$. Then, $Q \cap (A_1 \cup A_2)$ contains $C_4$ as a subgraph, so $Q$ is not a path. 
Then, either $Q \cap (A_1 \cup A_2)$ is an induced $C_4$, contradicting the fact that $G$ has no even holes, or  $Q \cap (A_1 \cup A_2)$ contains $K_4$ minus an edge as a subgraph, contradicting the fact that $Q$ is a path or a hole.
Therefore, up to symmetry  we may assume that $|Q \cap A_1| = 1$. Suppose $|Q \cap A_2| \geq 3$. Then, $Q \cap (A_1 \cup A_2)$ contains $K_{1, 3}$ as a subgraph, contradicting the fact that $Q$ is a path or a hole. 
\end{proof}

Lemma \ref{lem:small-intersection} has the following useful corollary. 

\begin{lem}
    Let $G$ be a graph and let $(A_1, C_1, B_1, A_2, C_2, B_2)$ be a 2-join of $G$. Let $P$ be a flat path of $G$. If $P \cap A_1 \neq \emptyset$ and $P \cap A_2 \neq \emptyset$, then $P \cap (A_1 \cup A_2)$ is an edge of $P$. Similarly, if $P \cap B_1 \neq \emptyset$ and $P \cap B_2 \neq \emptyset$, then $P \cap (B_1 \cup B_2)$ is an edge of $P$. 
    \label{lem:flat-path-edge}
\end{lem}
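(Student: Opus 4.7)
My plan is to invoke Lemma \ref{lem:small-intersection} to bound $|P \cap (A_1 \cup A_2)| \leq 3$, handle the size-$2$ case directly using that $P$ is induced, and rule out the size-$3$ case by combining flatness with the $2$-join structure. By the evident symmetry between the $A$'s and the $B$'s in the definition of a $2$-join, it suffices to prove the $A$-statement.

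If $|P \cap A_1| = |P \cap A_2| = 1$, let $a, a'$ be these two vertices. Completeness of $A_1$ to $A_2$ gives $aa' \in E(G)$, and inducedness of $P$ forbids any chord, so $aa'$ must be an edge of $P$, as required. The substantive case is $|P \cap (A_1 \cup A_2)| = 3$. By symmetry, assume $|P \cap A_1| = 1$ and $|P \cap A_2| = 2$; write $a \in P \cap A_1$ and $a_2, a_2' \in P \cap A_2$. Completeness forces $a \sim a_2$ and $a \sim a_2'$, and since $P$ is induced both must be edges of $P$; hence $P$ contains the subpath $(a_2, a, a_2')$, placing $a$ in the interior of $P$, and moreover $a_2 \not\sim a_2'$, as a chord of $P$ is forbidden.

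The hard part is to derive a contradiction from this configuration, and the plan is to exploit flatness together with the marker path of $Z_1$. Since $a$ is interior to the flat path $P$, $\deg_G(a) = 2$, so $N_G(a) = \{a_2, a_2'\}$; in particular $a$ has no neighbor in $Z_1$. Let $M_1 = (a_1, m_1, \ldots, m_k, b_1)$ be the marker path of $Z_1$ provided by the $2$-join. If $a_1 = a$, then the successor of $a$ on $M_1$ lies in $C_1 \cup B_1 \subseteq Z_1$, contradicting $N_G(a) \subseteq A_2$. Otherwise $a_1 \neq a$, and then both $a, a_1 \in A_1$ are complete to $\{a_2, a_2'\}$: if $a \sim a_1$, we again get a forbidden $Z_1$-neighbor of $a$; if $a \not\sim a_1$, the $4$-cycle $(a, a_2, a_1, a_2', a)$ has both potential chords $aa_1$ and $a_2 a_2'$ missing, so it is an induced $C_4$, contradicting the ambient even-hole-freeness (also used in Lemma \ref{lem:small-intersection}). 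This rules out the size-$3$ case and completes the plan.
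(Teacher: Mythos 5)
Your proof is correct and follows essentially the same route as the paper: invoke Lemma \ref{lem:small-intersection}, observe that in the size-$3$ case the vertex of $P$ on the singleton side is interior to $P$ and hence has degree $2$ with both neighbors across the $2$-join, and then use the marker path together with $C_4$-freeness to derive a contradiction. The only cosmetic difference is that the paper first shows the relevant $A_i$ is literally a singleton (via the same $C_4$ argument you use) before applying the marker path, whereas you case-split on whether the marker path's end coincides with $a$; you are also right that both proofs silently use the ambient even-hole-free (at least $C_4$-free) hypothesis, which the lemma statement omits.
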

\begin{proof}
    Assume that $P \cap A_1 \neq \emptyset$ and $P \cap A_2 \neq \emptyset$. By Lemma \ref{lem:small-intersection}, $|P \cap (A_1 \cup A_2)| \leq 3$. Suppose that $|P \cap A_1| = 2$. Since $P$ is a flat path and $G$ is $C_4$-free, it follows that $|A_2| = 1$. Let $\{a_2\} = A_2$, and note that $a_2 \in P$. Since $a_2$ has two neighbors in $P$, namely $P \cap A_1$, it follows that $a_2$ is an interior vertex of $P$, so by the definition of flat path, $a_2$ has degree two in $G$. But by the definition of 2-join, there is a path with ends in $A_2$ and $B_2$ and interior in $C_2$, so $a_2$ has a neighbor in $B_2 \cup C_2$, a contradiction. This completes the proof. 

\end{proof}

Next, we prove that if a graph $G$ admits a 2-join, then chordal covers of the blocks of decompositions of the 2-join can be combined into a chordal cover of $G$. 
\begin{lem}
\label{lem:helper}
    Let $G$ be a graph with no even hole. Assume that $G$ admits a 2-join $(A_1, C_1, B_1, A_2, C_2, B_2)$, where $Z_i = A_i \cup C_i \cup B_i$ for $i=1,2$. Let $G_1 = B(Z_1)$ and $G_2 = B(Z_2)$, and let $(X_1', X_2')$ and $(X_1'', X_2'')$ be chordal covers of $G_1$ and $G_2$, respectively. Further assume that: 
    \begin{itemize}
        \item $M_2 \subseteq X_1' \cap X_2'$,
        and 
        \item $\{a_1, b_1\} \cap X_1' \subseteq X_1''$ and $\{a_1, b_1\} \cap X_2' \subseteq X_2''$, where $a_1$ and $b_1$ are the ends of $M_1$ in $A_1$ and $B_1$, respectively. 
    \end{itemize}

    Let $X_1 = (X_1' \cap Z_1) \cup (X_1'' \cap Z_2)$ and let $X_2 = (X_2' \cap Z_1) \cup (X_2'' \cap Z_2)$.
    Then, $(X_1, X_2)$ is a chordal cover of $G$.

\end{lem}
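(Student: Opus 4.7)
The inclusion $X_1 \cup X_2 = V(G)$ is immediate, since $V(G) = Z_1 \cup Z_2$ and each $Z_i$ is covered by the corresponding pair. So I focus on showing that $G[X_1]$ and $G[X_2]$ are chordal; by symmetry it suffices to handle $G[X_1]$. Suppose for contradiction that $Q$ is an induced cycle of length at least $4$ in $G[X_1]$. If $V(Q) \subseteq Z_1$, then because $G_1[Z_1] = G[Z_1]$ and $V(Q) \subseteq X_1 \cap Z_1 = X_1' \cap Z_1$, $Q$ is an induced cycle in $G_1[X_1']$, contradicting chordality; the case $V(Q) \subseteq Z_2$ is symmetric using $G_2[X_1'']$.

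Assume now that $Q$ meets both $Z_1$ and $Z_2$. I would first apply Lemma~\ref{lem:small-intersection} to show that $P_1 := Q \cap Z_1$ and $P_2 := Q \cap Z_2$ are each a single induced path; otherwise, multiple components would place more than three vertices in $A_1 \cup A_2$ or in $B_1 \cup B_2$, contradicting that lemma. Each end of $P_1$ lies in $A_1 \cup B_1$ and each end of $P_2$ in $A_2 \cup B_2$. The main subcase is $P_1$ running from $A_1$ to $B_1$ and $P_2$ from $A_2$ to $B_2$. Here I would form the lifted cycle $Q' := P_1 \cup M_2$ in $G_1$, where the two crossing edges of $Q$ are replaced by the $A_1$-to-$a_2$ and $B_1$-to-$b_2$ edges of $G_1$. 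The interior of $P_1$ lies in $C_1$ (else an interior vertex in $A_1 \cup B_1$ would be complete to the corresponding end of $P_2$ and chord $Q$), hence is anticomplete to $M_2$; the interior of $M_2$ is in $C_2$, anticomplete to $Z_1$. Combined with $P_1 \subseteq X_1' \cap Z_1$ and the hypothesis $M_2 \subseteq X_1'$, $Q'$ is an induced cycle of length $|V(P_1)| + |V(M_2)| \geq 4$ in $G_1[X_1']$, contradicting chordality.

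In the degenerate subcase both ends of $P_1$ lie in $A_1$ and both of $P_2$ in $A_2$ (the all-$B$ version is symmetric). Lemma~\ref{lem:small-intersection} forces one of $P_1, P_2$ to be a single vertex. If the single vertex is in $Z_2$, say $P_2 = \{v\}$ with $v \in A_2$, the analogous lift $\{a_2\} \cup P_1$ is induced in $G_1[X_1']$ (the interior of $P_1$ avoids $A_1$, else $v$ would chord $Q$), giving the contradiction. The mirror situation $P_1 = \{u\}$ with $u \in A_1$ calls for a lift on the other block: $\{a_1\} \cup P_2$ is an induced cycle in $G_2$, and when $a_1 \in X_1''$ it contradicts chordality of $G_2[X_1'']$. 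The compatibility hypothesis gives $a_1 \in X_1''$ whenever $a_1 \in X_1'$; in particular this covers the case $u = a_1$.

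The main obstacle is the remaining situation: $u \neq a_1$ with $a_1 \notin X_1'$ (equivalently, $a_1 \notin X_1''$). Even-hole-freeness forces $u \sim a_1$, since otherwise $\{u, v_1, a_1, v_2\}$ would induce a $C_4$ in $G$; this produces a triangle $u a_1 a_2$ inside $G_1$. I would leverage this extra adjacency together with $M_2 \subseteq X_2'$ and $a_1 \in X_2'$ to construct an induced cycle of length $\geq 4$ in $G_1[X_2']$, likely by closing $a_1$-$a_2$-$M_2$-$b_2$ back to $a_1$ through a suitable $Z_1$-path in $X_2'$ (for instance via the marker path $M_1$ when its vertices lie in $X_2'$), thereby contradicting chordality. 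Pinning down this final lift, and symmetrically the all-$B$ counterpart, is the principal technical difficulty.
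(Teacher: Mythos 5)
Your overall strategy -- lift a crossing hole of $G[X_1]$ into one of the blocks and contradict chordality there -- is the same as the paper's, and your treatment of the main subcase ($Q\cap Z_1$ an $A_1$--$B_1$ path, $Q\cap Z_2$ an $A_2$--$B_2$ path, lifted to $P_1\cup M_2$ in $G_1[X_1']$) matches the paper's case (3). But there are two genuine gaps. First, the reduction ``Lemma \ref{lem:small-intersection} forces $Q\cap Z_1$ and $Q\cap Z_2$ to each be a single path'' is false. Take $Q\cap Z_1$ to be two non-adjacent vertices, one in $A_1$ and one in $B_1$, and $Q\cap Z_2$ to be two disjoint $A_2$--$B_2$ paths; then $|Q\cap(A_1\cup A_2)|=1+2=3$ and likewise for the $B$'s, so nothing contradicts Lemma \ref{lem:small-intersection}. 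This surviving configuration is exactly the paper's cases (1) and (2): when the two isolated vertices lie in $A_2\cup B_2$ one replaces them by $a_2,b_2\in M_2\subseteq X_1'$ and gets a hole of $G_1[X_1']$; when they lie in $A_1\cup B_1$ the paper instead shows $|A_1|=|B_1|=1$, so the isolated vertices are $a_1,b_1$ themselves and the compatibility hypothesis pushes the whole hole into $X_1''$. Your proposal never addresses this configuration.

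Second, the ``main obstacle'' you flag ($P_1=\{u\}$ with $u\in A_1\setminus\{a_1\}$ and $a_1\notin X_1'$) is left open, and the lift you sketch into $G_1[X_2']$ does not close up: $a_1$ is not adjacent to $b_2$, and nothing guarantees a $Z_1$-path inside $X_2'$ returning from the $B_1$ side to $a_1$. The paper's resolution is of a different kind: it shows the case cannot arise. If $a''\in A_1\setminus\{u\}$, then $a''$ is adjacent to $u$ (otherwise $u$, $a''$ and the two $Q$-neighbors of $u$ in $A_2$ induce a $C_4$), and then $N(a'')\cap Q$ contains the three-vertex path formed by $u$ and its two $Q$-neighbors, so $(Q,a'')$ is a twin wheel (or, if $a''$ also sees $Q\cap B_1$, an even wheel) -- excluded. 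Hence $A_1=\{a_1\}$, $u=a_1$, and the compatibility hypothesis applies. Note that this argument uses the absence of sector wheels, a hypothesis not listed in the lemma statement but available in every application; you correctly isolated the hard spot, but the missing idea is a wheel argument forcing $|A_1|=1$, not a further lifting construction.
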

\begin{proof}
Since $Z_1 \subseteq G_1$, it holds that $Z_1 \subseteq X_1' \cup X_2'$. Similarly, $Z_2 \subseteq X_1'' \cup X_2''$. Therefore, $Z_1 \cup Z_2 \subseteq X_1 \cup X_2$, and so $X_1 \cup X_2 = V(G)$. We show that $X_i$ is chordal.

    Suppose $H$ is a hole in $X_1$. 
Since $X_1'$ and $X_1''$ are chordal, it follows that $H \not \subseteq Z_1$ and $H \not \subseteq Z_2$; indeed, if say $H  \subseteq Z_1$, then $H \subseteq X_1 \cap Z_1 = X'_1 \cap Z_1 \subseteq X'_1$, contradicting the chordality of $X'_1$.
This implies further that  $H \cap Z_1 \neq \emptyset$ and $H \cap Z_2 \neq \emptyset$. Therefore, $H$ contains an edge with one end in $Z_1$ and one end in $Z_2$.  
By Lemma \ref{lem:small-intersection}, one of the following holds: 
    \begin{itemize}
        \item[(1)]  $H \cap Z_1$ is independent and consists of at most one vertex of $A_1$ and at most one vertex of $B_1$, or
        \item[(2)] $H \cap Z_2$ is independent and consists of at most one vertex of $A_2$ and at most one vertex of $B_2$, or
        \item[(3)] $H \cap Z_1$ is a path with ends in $A_1$ and $B_1$ and (possibly empty) interior in $C_1$ and $H \cap Z_2$ is a path with ends in $A_2$ and $B_2$ and (possibly empty) interior in $C_2$.
    \end{itemize}
    
    First, suppose (1) holds. We claim that if $H \cap A_1 \neq \emptyset$, then $|A_1| = 1$. Suppose for a contradiction that $H \cap A_1 \neq \emptyset$ and $|A_1| > 1$. Note that $|H\cap A_2|>1$ for otherwise $H$ is not a hole. Let $a'$ be the vertex of $H \cap A_1$, and let $a'' \in A_1 \setminus \{a'\}$. Since $\{a', a''\} \cup (N_H(a'))$  is not a $C_4$, it follows that $a'a'' \in E(G)$. But now $(H, a'')$ is a twin wheel of $G$, contradicting that $G$ has no sector wheel. This proves that $|A_1| = 1$, and so $A_1 = \{a_1\}$. Similarly, if $H \cap B_1 \neq \emptyset$, then $B_1 = \{b_1\}$. Therefore, $H \subseteq G_2$. Since $H \subseteq X_1$, it follows that $H \cap Z_1 \subseteq X_1'$, and by the second assumption of the lemma, $H \cap Z_1 \subseteq X_1''$. It follows that $H \subseteq X_1''$, contradicting the chordality of $X_1''$. Therefore, (1) does not hold. 

    Next, suppose (2) holds. Let $H'$ be the hole of $G_1$ formed by replacing the vertex of $H \cap A_2$, if it exists, with $a_2$, and replacing the vertex of $H \cap B_2$, if it exists, with $b_2$. Since $H \subseteq X_1$, it follows that $H \cap Z_1 \subseteq X_1'$, and by the first assumption of the lemma, $H \subseteq X_1'$. Now, $H$ is a hole of $X_1'$, contradicting the chordality of $X_1'$. Therefore, (2) does not hold. 

    Since (1) and (2) do not hold, it follows that (3) holds. Let $Q_1 = H \cap Z_1$ and $Q_2 = H \cap Z_2$, so $Q_1 \subseteq X_1'$. Now, $Q_1 \cup M_2$ is a hole of $G_1$ and, by the first assumption of the lemma, $Q_1 \cup M_2 \subseteq X_1'$, contradicting the chordality of $X_1'$. This completes the proof. 
\end{proof}

By symmetry, the following lemma is also true:

\begin{lem}
\label{lem:helper2}
    Let $G$ be a graph with no even hole. Assume that $G$ admits a 2-join $(A_1, C_1, B_1, A_2, C_2, B_2)$, where $Z_i = A_i \cup C_i \cup B_i$ for $i=1,2$. Let $G_1 = B(Z_1)$ and $G_2 = B(Z_2)$, and let $(X_1', X_2')$ and $(X_1'', X_2'')$ be chordal covers of $G_1$ and $G_2$, respectively. Further assume that: 
    \begin{itemize}
        \item $M_1 \subseteq X_1'' \cap X_2''$ 
        and 
        \item $\{a_2, b_2\} \cap X_1'' \subseteq X_1'$ and $\{a_2, b_2\} \cap X_2'' \subseteq X_2'$, where $a_2$ and $b_2$ are the ends of $M_2$ in $A_2$ and $B_2$, respectively.
    \end{itemize}
    Let $X_1 = (X_1' \cap Z_1) \cup (X_1'' \cap Z_2)$ and let $X_2 = (X_2' \cap Z_1) \cup (X_2'' \cap Z_2)$.
    Then, $(X_1, X_2)$ is a chordal cover of $G$.
\end{lem}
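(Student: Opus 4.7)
The plan is to run the proof of Lemma \ref{lem:helper} verbatim after exchanging the roles $1 \leftrightarrow 2$ throughout: swap $Z_1$ with $Z_2$, $(X_1', X_2')$ with $(X_1'', X_2'')$, $M_1$ with $M_2$, and $(a_1, b_1)$ with $(a_2, b_2)$. Under this relabeling, the hypotheses of Lemma \ref{lem:helper2} become exactly the hypotheses of Lemma \ref{lem:helper}, while the defining formulas $X_i = (X_i' \cap Z_1) \cup (X_i'' \cap Z_2)$ are preserved. The verification that $X_1 \cup X_2 = V(G)$ then follows as before from $Z_1 \subseteq X_1' \cup X_2'$ and $Z_2 \subseteq X_1'' \cup X_2''$.

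For chordality, I would suppose $H$ is a hole of $X_1$ (the case of $X_2$ is identical) and invoke Lemma \ref{lem:small-intersection} to reach the same three cases: (1) $H \cap Z_1$ is an independent set containing at most one vertex of $A_1$ and at most one of $B_1$; (2) the analogous statement with $Z_2$; or (3) both $H \cap Z_i$ are paths with ends in $A_i$ and $B_i$ and interior in $C_i$. In case (1), I would form a hole $H'$ of $G_2$ by substituting $a_1$ for the (at most one) vertex of $H \cap A_1$ and $b_1$ for the (at most one) vertex of $H \cap B_1$; the first hypothesis $M_1 \subseteq X_1'' \cap X_2''$ places $a_1, b_1$ in $X_1''$, while $H \cap Z_2 \subseteq X_1''$ follows from $X_1 \cap Z_2 = X_1'' \cap Z_2$, so $H' \subseteq X_1''$, contradicting chordality of $X_1''$. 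In case (2), the same twin-wheel argument used in Lemma \ref{lem:helper} forces $A_2 = \{a_2\}$ and $B_2 = \{b_2\}$; hence $H \cap Z_2 \subseteq \{a_2, b_2\} \cap X_1'' \subseteq X_1'$ by the second hypothesis, which together with $H \cap Z_1 \subseteq X_1'$ gives $H \subseteq X_1'$, a contradiction. In case (3), writing $Q_i = H \cap Z_i$, the cycle $Q_2 \cup M_1$ is a hole of $G_2$ contained in $X_1''$ by the first hypothesis, contradicting chordality once more.

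The only real obstacle is bookkeeping. In the proof of Lemma \ref{lem:helper}, case (1) uses the sector-wheel hypothesis together with the boundary-agreement hypothesis $\{a_1, b_1\} \cap X_j' \subseteq X_j''$, while case (2) uses the marker-path inclusion $M_2 \subseteq X_1' \cap X_2'$. Under the $1 \leftrightarrow 2$ swap the roles of these two hypotheses exchange, so one must be careful to route case (1) of Lemma \ref{lem:helper2} through the marker-path inclusion $M_1 \subseteq X_1'' \cap X_2''$ and case (2) through the boundary agreement $\{a_2, b_2\} \cap X_j'' \subseteq X_j'$, rather than the other way around. Apart from this relabeling discipline, no new combinatorial content is needed; the proof is essentially a symmetric transcription of the previous one.
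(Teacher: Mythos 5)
Your proposal is correct and is exactly the paper's approach: the paper derives Lemma \ref{lem:helper2} from Lemma \ref{lem:helper} purely ``by symmetry,'' which is the $1\leftrightarrow 2$ relabeling you carry out. Your explicit note that the two hypotheses swap roles between cases (1) and (2) under the relabeling is an accurate and worthwhile clarification of what that symmetry entails.
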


Next, we prove that a partial precover can be extended to a full precover. 

\begin{lem}\label{lem:construct-precoloring}
    Let $G$ be a graph with no even hole, no star cutset, and no sector wheel. Let $P = p_1 \dd \hdots \dd p_k$ be a flat path of $G$. Let $(W_1, W_2)$ be such that $W_1 \cap W_2 = V(P)$, $W_1 \cup W_2 \subseteq N[P]$, and $G[W_1]$ and $G[W_2]$ are chordal. Also assume that $N(p_1) \cap N(p_k) \subseteq W_1 \cup W_2$. Then, there exists $W_1'$, $W_2'$ with $W_1 \subseteq W_1'$, $W_2 \subseteq W_2'$, such that $W_1' \cap W_2' = V(P)$, $W_1' \cup W_2' = N[P]$, and $G[W_1']$ and $G[W_2']$ are chordal. 
\end{lem}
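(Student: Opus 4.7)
My plan is to add the unassigned vertices of $N[P]\setminus V(P)\setminus(W_1\cup W_2)$ one at a time. Because $P$ is flat, every such vertex lies in $N(p_1)\cup N(p_k)$, and the hypothesis $N(p_1)\cap N(p_k)\subseteq W_1\cup W_2$ forces every unassigned $u$ to be adjacent to exactly one endpoint of $P$; say $u\in N(p_1)\setminus N(p_k)$. The main step is to show that at each stage, at least one of $G[W_1\cup\{u\}]$ or $G[W_2\cup\{u\}]$ is chordal; iterating then yields the desired $W_1',W_2'$.

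Suppose for contradiction both $G[W_i\cup\{u\}]$ contain a hole $H_i\ni u$. Because $P$ is flat, any interior $p_j$ in $H_i$ cascades so that all of $V(P)\subseteq H_i$, and hence $H_i\cap V(P)\in\{\emptyset,\{p_1\},\{p_k\},\{p_1,p_k\},V(P)\}$. The cases $H_i\cap V(P)\in\{V(P),\{p_1,p_k\}\}$ close into cycles through $V(P)$ whose structure, combined with chord analysis, yields either an induced even hole or a sector wheel on $(H_i,p_k)$ via three consecutive neighbors on the closing arc, contradicting the hypotheses. For $H_i\cap V(P)\subseteq\{p_k\}$, a wheel count on $(H_i,p_1)$ — using that $u\in N(p_1)\cap H_i$ while neighbors of $p_1$ in $W_i\setminus V(P)$ are confined to $W_i\cap(N(p_1)\setminus N(p_k))\cup W_i\cap N(p_1)\cap N(p_k)$ — forces a sector wheel once $|H_i|\ge 5$. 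The only surviving case is $H_i\cap V(P)=\{p_1\}$, in which analysis of $(H_i,p_k)$ forces $H_i$ to be a $5$-hole $u-p_1-n_i-b_{i,2}-b_{i,1}-u$ with $n_i\in W_i\cap N(p_1)\setminus N(p_k)$ and $b_{i,1},b_{i,2}\in W_i\cap N(p_k)\setminus N(p_1)$.

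With both $5$-holes I produce the final contradiction. The $4$-cycle $u-b_{1,1}-p_k-b_{2,1}-u$ has only one possible chord, namely $b_{1,1}\sim b_{2,1}$, which must exist to avoid an induced even $C_4$. The $6$-cycle $p_1-u-b_{2,1}-p_k-b_{1,2}-n_1-p_1$ must then be broken by one of the chords $b_{2,1}\sim b_{1,2}$ or $b_{2,1}\sim n_1$; the former creates a sector wheel on $H_1$ (giving three consecutive neighbors $b_{1,1},b_{1,2},u$), forcing $b_{2,1}\sim n_1$. Symmetrically $n_2\sim b_{1,1}$. Finally, the $6$-cycle $p_1-n_1-b_{1,2}-p_k-b_{2,2}-n_2-p_1$ requires a chord among $n_1\sim n_2$, $n_1\sim b_{2,2}$, $b_{1,2}\sim b_{2,2}$, or $b_{1,2}\sim n_2$. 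A case-by-case check shows each choice either produces a sector wheel on $H_1$ or $H_2$, or combines with the already-forced edges $b_{1,1}\sim b_{2,1}$, $n_1\sim b_{2,1}$, $n_2\sim b_{1,1}$ to yield an induced $4$-cycle $n_1-n_2-b_{1,1}-b_{2,1}-n_1$ (an even hole). Every case contradicts the assumptions.

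The principal obstacle is the chord propagation in the last paragraph: each candidate chord must be checked systematically, and the already-forced edges combined correctly to produce either a sector wheel or the induced $C_4$. Minor obstacles are the degenerate cases $k\le 2$, where the cascading argument needs a small separate treatment, and the possibility that $n_i$ was a previously added $U_1^*$-vertex rather than a member of the original $W_i$; in both situations the structural arguments above carry through essentially verbatim.
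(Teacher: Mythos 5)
Your overall strategy differs from the paper's: the paper first proves that every unassigned $v\in N(p_i)\setminus N(p_{k+1-i})$ has \emph{at most one} neighbour in $N(p_{k+1-i})$ (respectively $N(p_{k+1-i})\setminus N(p_i)$ when $k=2$), so that the ``cross-edges'' between the two ends form a matching; it then makes one global assignment (each vertex goes to the side opposite its unique cross-neighbour, or to $W_1'$ if it has none) and verifies chordality of the finished sets $W_1',W_2'$ in one pass, using the fact that no cross-edge is monochromatic and that a hole avoiding $int(P)$ inside $N(p_i)$ would be a universal wheel. You instead insert vertices one at a time and try to prove the invariant that at each stage some side accepts $u$. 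That could in principle work, but your execution has a genuine gap.

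The gap is your treatment of the case $H_i\cap V(P)=V(P)$. You assert that this case ``yields either an induced even hole or a sector wheel,'' but it does not: since $p_1$ and $p_k$ each have exactly two neighbours on a hole containing them, the closing arc of $H_i$ consists of exactly two vertices, so $H_i=P\cup\{u,w\}$ with $w\in W_i\cap N(p_k)\setminus N(p_1)$ adjacent to $u$. When $k$ is odd this is an odd hole and produces no sector wheel; it is fully consistent with every hypothesis, and indeed it is precisely the obstruction that dictates which side $u$ must join (this is why the paper's rule sends $u$ opposite to its cross-neighbour). To rescue your argument you need the fact --- which you never state or prove --- that $u$ has at most one neighbour in $N(p_k)\setminus N(p_1)$ (the paper derives it from $C_4$-freeness plus the sector-wheel exclusion). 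With that fact in hand, the ``both sides fail'' hypothesis collapses quickly: in each of your surviving configurations ($P\subseteq H_i$, or the five-hole $u\dd p_1\dd n_i\dd b_{i,2}\dd b_{i,1}\dd u$), the vertex adjacent to both $u$ and $p_k$ lies in $W_i\setminus V(P)$, so having both $H_1$ and $H_2$ would give $u$ two distinct neighbours in $N(p_k)\setminus N(p_1)$ --- a contradiction that makes your entire final chord-propagation paragraph unnecessary (it analyses a configuration that cannot occur). A secondary soft spot: in the case $H_i\cap V(P)\subseteq\{p_k\}$ you claim a ``wheel count on $(H_i,p_1)$ forces a sector wheel,'' but $p_1$ may have only two neighbours in $H_i$, or $(H_i,p_1)$ may be a short pyramid; excluding these requires the even-hole parity argument (as in Lemma \ref{lem:no-even-wheel}) or the no-star-cutset hypothesis via Lemma \ref{lem:proper-wheel}, not merely the absence of sector wheels.
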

\begin{proof}
     We construct $W_1'$ and $W_2'$ as follows. We begin by adding every vertex of $W_i$ to $W_i'$ for $i = 1, 2$. Then, as long as $N[P] \setminus (W_1' \cup W_2')$ is not empty, we choose $v \in N[P] \setminus (W_1' \cup W_2')$. Note that since $P$ is a flat path and by the assumptions of the lemma, $v\in (N(p_1)\cup N(p_k)) \setminus (N(p_1)\cap N(p_k))$. 
     
     First, suppose that $P$ has length greater than one. Assume $v \in N(p_i) \setminus N(p_{k+1-i})$ for $i \in \{1, k\}$. We claim that $v$ has at most one neighbor in $N(p_{k+1-i})$. Indeed, Suppose $v$  has two neighbors $x_1, x_2 \in N(p_{k+1-i})$. Since $\{p_{k+1-i}, x_1, x_2, v\}$ does not induce a $C_4$, it follows that $x_1$ and $x_2$ are adjacent. If $p_i$ is adjacent to both $x_1$ and $x_2$, then $(P \cup \{x_1\}, x_2)$ is a twin wheel, contradicting that $G$ has no sector wheel. Therefore, we may assume that $p_i$ is non-adjacent to $x_1$.  But now $P \cup \{x_1, v\}$ is a hole and $N(x_2) \cap (P \cup \{x_1, v\})$ is a path of length two, contradicting that $G$ has no sector wheel.

We now follow the process below, which is well defined since $v$ has at most one meighbor in $N(p_{k+1-i})$:

    \begin{itemize}
        \item If $v$ is anticomplete to $N(p_{k+1 - i})$, then add $v$ to $W_1'$. 
        \item If $v$ has a neighbor in $N(p_{k+1-i}) \cap W_1'$, then add $v$ to $W_2'$. 
        \item If $v$ has a neighbor in $N(p_{k+1-i}) \cap W_2'$, then add $v$ to $W_1'$. 
    \end{itemize}

By the above, 
 the sets $W_1'$, $W_2'$ formed in this way are unique: every vertex $v \in N(p_{i}) \setminus N(p_{k+1-i})$ is assigned to exactly one of $W_1', W_2'$ as above. 

Now, suppose that $P$ has length one. Assume $v \in N(p_i) \setminus N(p_{k+i-1})$ for $i \in \{1, k\}$. We claim that $v$ has at most one neighbor in $ N(p_{k+1-i}) \setminus N(p_i)$. Indeed, suppose $v$ has two neighbors $x_1, x_2 \in N(p_{k+1-i}) \setminus N(p_i)$. Since $\{p_{k+1-i}, x_1, x_2, v\}$ does not induce a $C_4$, it follows that $x_1$ and $x_2$ are adjacent. But now $P \cup \{x_1, v\}$ is a hole and $N(x_2) \cap (P \cup \{x_1, v\})$ is a path of length two, contradicting that $G$ has no sector wheel. 

We now follow the process below:
\begin{itemize}
    \item If $v$ is anticomplete to $N(p_{k+1-i}) \setminus N(p_i)$, then add $v$ to $W_1'$. 

    \item If $v$ has a neighbor in $(N(p_{k+1-i}) \setminus N(p_i)) \cap W_1'$, then add $v$ to $W_2'$. 

    \item If $v$ has a neighbor in $(N(p_{k+1-i}) \setminus N(p_i)) \cap W_2'$, then add $v$ to $W_1'$.
\end{itemize}

Again, every vertex $v \in N(p_i) \setminus N(p_{k+1-i})$ is assigned to exactly one of $W_1'$, $W_2'$ by the argument above.

Next, we prove that $W_1'$ and $W_2'$ satisfy the conditions of the lemma. By the construction of $W_1'$ and $W_2'$, we have that $W_1' \cap W_2' = V(P)$ and that $W_1' \cup W_2' = N[P]$. It remains to show that $G[W_1']$ and $G[W_2']$ are chordal. Suppose that $G[W_1']$ contains a hole $H$. Suppose first that $P \subseteq H$; so $H \setminus P$ is either an edge with one end in $N(p_1)$ and one end in $N(p_k)$ or a vertex in $N(p_1) \cap N(p_k)$. Since, by the construction of $W_1'$ and $W_2'$, no edge with one end in $N(p_1) \setminus N(p_k)$ and one end in $N(p_k) \setminus N(p_1)$ has both ends in $W_1'$, it follows that $H \setminus P$ is a vertex in $N(p_1) \cap N(p_k)$. But now $H$ is a hole of $W_1$, a contradiction. Therefore, $P \not \subseteq H$, and thus $H \subseteq N[p_1]$ or $H \subseteq N[p_k]$. Since $p_i$ is complete to $N(p_i)$, it follows that $H \subseteq N(p_i)$ for $i = 1, k$. But now $(H, p_i)$ is a universal wheel, a contradiction. This proves that $G[W_1']$ is chordal. The proof that $G[W_2']$ is chordal follows similarly. 
\end{proof}

Next, we prove: 

\begin{lem}\label{lem:2-join}
Let $G$ be a graph with no even hole, no sector wheel, and no star cutset. Suppose that $G$ is non-FPE and that every proper induced subgraph of $G$ with no star cutset is FPE. Then, $G$ does not admit a 2-join. 
\end{lem}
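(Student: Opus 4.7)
The plan is to assume for contradiction that $G$ admits a 2-join $(A_1, C_1, B_1, A_2, C_2, B_2)$, and to glue chordal covers of the two blocks of decomposition $G_i = B(Z_i)$ to contradict non-FPE of $G$. Each $G_i$ is a proper induced subgraph of $G$, because the 2-join definition requires $Z_{3-i}$ to strictly contain the marker path $M_{3-i}$. Moreover $G_i$ inherits from $G$ the absence of even holes and of sector wheels, and by Theorem~\ref{thm:block-no-sc}, $G_i$ has no star cutset. Thus the hypothesis that every proper induced subgraph of $G$ with no star cutset is FPE applies to both $G_1$ and $G_2$, so both are FPE. Fix a witness $(P, W_1, W_2)$ for $G$; the goal is to construct a chordal cover of $G$ extending it.

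The first step is a structural claim on $P$: if $V(P) \subseteq Z_i$ then $\mathrm{int}(P) \subseteq C_i$. An interior vertex $v \in \mathrm{int}(P) \cap A_i$ would have degree two in $G$ while being complete to the nonempty set $A_{3-i}$, forcing $A_{3-i} \subseteq V(P) \subseteq Z_i$, a contradiction; and similarly for $B_i$. Consequently $P$ is also a flat path in $G_i$, and $N_{G_i}[P] = N_G[P] \cap V(G_i)$. The crossing case $V(P) \cap Z_1 \neq \emptyset \neq V(P) \cap Z_2$ is handled by Lemma~\ref{lem:flat-path-edge}, which restricts the crossing to a single edge of $P$ lying in $A_1 \cup A_2$ or $B_1 \cup B_2$; a nearly identical degree count confines the remaining interior vertices of $P$ to $C_1 \cup C_2$.

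In the main case $V(P) \subseteq Z_1$, set $W_i^{(1)} = W_i \cap V(G_1)$. By the structural claim, $(P, W_1^{(1)}, W_2^{(1)})$ is a precover of $G_1$; applying FPE of $G_1$ produces a chordal cover $(X_1', X_2')$ of $G_1$ extending it, which in particular assigns $a_2$ and $b_2$ to specific sides. To build a compatible chordal cover of $G_2$, I would use FPE of $G_2$ on the flat path $M_1$: since $\mathrm{int}(M_1) \subseteq C_1$ and $C_1$ is anticomplete to $Z_2 \setminus V(M_1)$, $M_1$ is flat in $G_2$. To enforce compatibility with $(X_1', X_2')$, I would apply Lemma~\ref{lem:construct-precoloring} inside $G_2$ to extend the pre-assignment ``$a_2$ and $b_2$ go to the same sides they occupy in $(X_1', X_2')$'' to a full precover $(M_1, U_1, U_2)$ of $G_2$. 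Applying FPE of $G_2$ then produces $(X_1'', X_2'')$ with $V(M_1) \subseteq X_1'' \cap X_2''$ and $\{a_2, b_2\} \cap X_i'' \subseteq X_i'$. Lemma~\ref{lem:helper2} now combines the two covers into a chordal cover of $G$ extending $(P, W_1, W_2)$, giving the contradiction. The crossing case follows the same template, with the block containing the larger portion of $P$ playing the role of $G_1$ and with Lemma~\ref{lem:helper} or Lemma~\ref{lem:helper2} chosen according to which marker path needs to be duplicated in its block.

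I expect the coordination step to be the main obstacle: the chordal cover of $G_2$ produced by FPE must simultaneously contain $V(M_1)$ in both parts (for the gluing lemma) \emph{and} split $a_2, b_2$ as dictated by the already-chosen cover of $G_1$. The first condition is automatic from using $M_1$ as the flat path in the FPE call; the second requires Lemma~\ref{lem:construct-precoloring} applied inside $G_2$. Verifying its hypotheses -- that the initial subgraphs $G_2[U_i]$ are chordal and that $N_{G_2}(a_1) \cap N_{G_2}(b_1)$ is already assigned to $U_1 \cup U_2$ -- uses even-hole-freeness and sector-wheel-freeness of $G_2$ together with the fact that $a_1 \in A_1$ and $b_1 \in B_1$ lie on opposite sides of the 2-join and hence have very limited common neighborhood.
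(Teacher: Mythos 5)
Your overall strategy coincides with the paper's: both blocks $B(Z_i)$ are proper induced subgraphs of $G$ with no star cutset (Theorem \ref{thm:block-no-sc}), hence FPE by hypothesis; one manufactures precovers of the blocks with Lemma \ref{lem:construct-precoloring}, extends them using FPE, and glues the resulting covers with Lemma \ref{lem:helper} or Lemma \ref{lem:helper2}. The difference is in how the two block covers are coordinated, and this is where your argument has a genuine gap. In your main case $V(P)\subseteq Z_1$, your structural claim only places the \emph{interior} of $P$ in $C_1$; an endpoint of $P$ may lie in $A_1$ or $B_1$. If, say, an endpoint of $P$ lies in $A_1$, then $A_2\subseteq N_G(P)$, so the witness sets $W_1,W_2$ contain vertices of $A_2$ (and possibly $B_2$) other than $a_2$ and $b_2$. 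A chordal cover of $G$ that extends $(P,W_1,W_2)$ must place every vertex of $W_i\cap Z_2$ into $X_i''\cap Z_2$. Your precover $(M_1,U_1,U_2)$ of $G_2$ pre-assigns only $a_2$ and $b_2$, and Lemma \ref{lem:construct-precoloring} is then free to distribute the remaining vertices of $N_{G_2}[M_1]\supseteq A_2\cup B_2$ without regard to $W_1$ and $W_2$. The glued pair $(X_1,X_2)$ is therefore a chordal cover of $G$, but it need not contain $W_i$ in $X_i$, so it does not extend the witness precover and yields no contradiction with non-FPE.

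This is precisely why the paper splits your main case into two: when $P\subseteq C_1$ one has $N[P]\subseteq Z_1$, so $W_1,W_2\subseteq Z_1$, only $a_2,b_2$ need coordinating, and your construction is the paper's; when $P$ meets $A_1\cup B_1$ the paper instead seeds the $G_2$-precover with $N(M_1)\cap W_i\cap G_2$ (which contains all of $W_i\cap Z_2$, since $W_i\cap Z_2\subseteq A_2\cup B_2\subseteq N(M_1)$) and verifies the second hypothesis of Lemma \ref{lem:helper2} either by absorbing $M_2$ into the path used for $G_1$ or by observing that $a_2,b_2\in N(P)\subseteq W_1\cup W_2$, so both block covers assign them consistently. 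Your argument can be repaired along these lines by adding $W_i\cap Z_2$ to $U_i$, but then the chordality of $G_2[U_i]$ and the compatibility of this seed with the $a_2,b_2$ assignment inherited from $(X_1',X_2')$ must be checked (the latter is automatic, since $a_2,b_2\in V(G_1)$ and $(X_1',X_2')$ extends $W_i\cap V(G_1)$). The same under-specification affects your crossing case, which the paper treats in two separate steps with different choices of the paths $P_1,P_2$ and of which gluing lemma to apply, including a further sub-case depending on whether $P$ meets $B_2$.
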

\begin{proof}
    Suppose for a contradiction that $(A_1, C_1, B_1, A_2, C_2, B_2)$ is a 2-join of $G$. Let $P = p_1 \dd \hdots \dd p_k$ be a witness path for $G$ with witness sets $W_1$, $W_2$. Assume up to symmetry that $p_1 \in Z_1=A_1 \cup C_1 \cup B_1$. Let $G_1 = B(Z_1)$ and let $G_2 = B(Z_2)$. By Theorem \ref{thm:block-no-sc}, $G_1$ and $G_2$ have no star cutset. Since every proper induced subgraph of $G$ with no star cutset is FPE, and $G_1$ and $G_2$ are proper induced subgraphs of $G$ with no star cutsets, it follows that $G_1$ and $G_2$ are FPE. Our strategy to complete the proof is to find appropriate chordal covers of $G_1$ and $G_2$, and use Lemmas \ref{lem:helper} and \ref{lem:helper2} to obtain a chordal cover of $G$ that extends $(P, W_1, W_2)$, reaching a contradiction. 

    First we show: 

    \sta{\label{pk-in-Z1} $p_k \not \in Z_2$. }

    Suppose that $p_k \in Z_2= A_2 \cup C_2 \cup B_2$. By Lemma \ref{lem:flat-path-edge}, $P$ contains exactly one edge with one end in $Z_1$ and one end in $Z_2$. Assume up to symmetry between $A_1$ and $B_1$ that $1 \leq i \leq k$ is such that $\{p_1, \hdots, p_i\} \subseteq Z_1$, $\{p_{i+1}, \hdots, p_k\} \subseteq Z_2$, $p_i \in A_1$, and $p_{i+1} \in A_2$. Since $B_1$ is complete to $B_2$, not both $P \cap B_1 \neq \emptyset$ and $P \cap B_2 \neq \emptyset$, and since $p_1\in Z_1$ and $p_k\in Z_2$, we may assume up to symmetry between $B_1$ and $B_2$ that $P \cap B_1 = \emptyset$.

    Let $P_1 = (P \cap Z_1) \cup M_2$. Let $W_1'' = N(p_1) \cap W_1 \cap G_1$ and $W_2'' = N(p_1) \cap W_2 \cap G_1$. By Lemma \ref{lem:construct-precoloring}, there exist sets $W_1'$, $W_2'$ such that $W_1' \cap W_2' = V(P_1)$, $W_1' \cup W_2' = N[P_1]$, and $G_1[W_1']$ and $G_1[W_2']$ are chordal. Let $(X_1', X_2')$ be a chordal cover of $G_1$ that extends $(P_1, W_1', W_2')$. Next, we find a chordal cover of $G_2$.  First, assume that $P \cap (B_1 \cup B_2) = \emptyset$. Let $P_2 = (P \cap Z_2) \cup M_1$.  Let $Y_1'' = N(p_k) \cap W_1 \cap G_2$ and $Y_2'' = N(p_k) \cap W_2 \cap G_2$. By Lemma \ref{lem:construct-precoloring}, there exist sets $Y_1', Y_2'$ such that $Y_1' \cap Y_2' = V(P_2)$, $Y_1' \cup Y_2' = N[P_2]$, and $G_2[Y_1']$ and $G_2[Y_2']$ are chordal. Let $(X_1'', X_2'')$ be a chordal cover of $G_2$ that extends $(P_2, Y_1', Y_2')$. Let $X_1 = (Z_1 \cap X_1') \cup (Z_2 \cap X_1'')$ and let $X_2 = (Z_1 \cap X_2') \cup (Z_2 \cap X_2'')$. Since $M_2 \subseteq P_1$ and $M_1 \subseteq P_2$, it follows that the conditions of Lemma \ref{lem:helper} are satisfied. By Lemma \ref{lem:helper}, $(X_1, X_2)$ is a chordal cover of $G$. By the construction of $X_1$ and $X_2$, it follows that $X_1 \cap X_2 = V(P)$, $W_1 \subseteq X_1$, and $W_2 \subseteq X_2$. Now, $(X_1, X_2)$ is a chordal cover of $G$ that extends $(P, W_1, W_2)$, contradicting that $G$ is non-FPE. 

    Therefore, $P \cap B_2 \neq \emptyset$. Let $a_1$ and $b_1$ be the ends of $M_1$. Let $P_2' = (P \cap Z_2)$. Let $U_1'' = (N(p_k) \cap W_1 \cap G_2) \cup (\{a_1, b_1\} \cap X_1')$ and let $U_2'' = (N(p_k) \cap W_2 \cap G_2) \cup (\{a_1, b_1\} \cap X_2')$. By Lemma \ref{lem:construct-precoloring}, there exist sets $U_1', U_2'$ such that $U_1' \cap U_2' = V(P_2')$, $U_1' \cup U_2' = N[P_2']$, and $G_2[U_1']$ and $G_2[U_2']$ are chordal. Let $(X_1'', X_2'')$ be a chordal cover of $G_2$ that extends $(P_2', U_1', U_2')$. Let $X_1 = (X_1' \cap Z_1) \cup (X_1'' \cap Z_2)$ and let $X_2 = (X_2' \cap Z_1) \cup (X_2'' \cap Z_2)$. Since $M_2 \subseteq P_1$, and by construction of $U_1''$ and $U_2''$, it follows that the conditions of Lemma \ref{lem:helper} are satisfied. Now, by Lemma \ref{lem:helper}, $(X_1, X_2)$ is a chordal cover of $G$. By the construction of $X_1$ and $X_2$, it follows that $X_1 \cap X_2 = V(P), W_1 \subseteq X_1$, and $W_2 \subseteq X_2$. Now, $(X_1, X_2)$ is a chordal cover of $G$ that extends $(P, W_1, W_2)$, contradicting that $G$ is non-FPE. This proves \eqref{pk-in-Z1}. \\

Next we show: 

    \sta{\label{P-in-Z1} $P \subseteq Z_1$.}

    By \eqref{pk-in-Z1}, $\{p_1, p_k\} \subseteq Z_1$. Suppose $P \not \subseteq Z_1$. By Lemma \ref{lem:flat-path-edge}, it follows that $P \cap Z_2$ is a path with ends in $A_2$ and $B_2$ and interior in $C_2$. Let $P_1 = (P \cap Z_1) \cup M_2$ and let $P_2 = M_1$. Let $W_1'' = N(P_1) \cap W_1 \cap G_1$ and $W_2'' = N(P_1) \cap W_2 \cap G_2$. By Lemma \ref{lem:construct-precoloring}, there exist $W_1'$, $W_2'$ such that $W_1' \cap W_2' = V(P_1)$, $W_1' \cup W_2' = N[P_1]$, and $G_1[W_1']$ and $G_1[W_2']$ are chordal. Let $(X_1', X_2')$ be a chordal cover of $G_1$ that extends $(P_1, W_1', W_2')$. Next, let $U_1'' = N(P_2) \cap W_1 \cap G_2$ and $U_2'' = N(P_2) \cap W_2 \cap G_2$. By Lemma \ref{lem:construct-precoloring}, there exists $U_1'$, $U_2'$ such that $U_1' \cap U_2' = V(P_2)$, $U_1' \cup U_2' = N[P_2]$, and $G_2[U_1']$ and $G_2[U_2']$ are chordal. Let $(X_1'', X_2'')$ be a chordal cover of $G_2$ that extends $(P_2, U_1', U_2')$. 

    Now, let $X_1 = (Z_1 \cap X_1') \cup (Z_2 \cap X_1'')$ and $X_2 = (Z_1 \cap X_2') \cup (Z_2 \cap X_2'')$. Since $M_2 \subseteq P_1$ and $M_1 \subseteq P_2$, it follows that the conditions of Lemma \ref{lem:helper} are satisfied. By Lemma \ref{lem:helper}, $(X_1, X_2)$ is a chordal cover of $G$. By the construction of $X_1$ and $X_2$, it follows that $X_1 \cap X_2 = V(P)$, $W_1 \subseteq X_1$, and $W_2 \subseteq X_2$. Therefore, $(X_1, X_2)$ is a chordal cover of $G$ that extends $(P, W_1, W_2)$, contradicting that $G$ is non-FPE. This proves \eqref{P-in-Z1}. \\

Next we show: 

\sta{\label{P-in-C1} $P \subseteq C_1$.}
 By \eqref{P-in-Z1}, $P \subseteq Z_1=A_1 \cup C_1 \cup B_1$. Let $P_2 = M_1$, let $W_1'' = N(P_2) \cap W_1 \cap G_2$, and let $W_2'' = N(P_2) \cap W_2 \cap G_2$. By Lemma \ref{lem:construct-precoloring}, there exist $W_1', W_2'$ such that $W_1' \cap W_2' = V(P_2)$, $W_1' \cup W_2' = N[P_2]$, and $G_2[W_1']$ and $G_2[W_2']$ are chordal. Let $(X_1'', X_2'')$ be a chordal cover of $G_2$  that extends $(P_2, W_1', W_2')$.

 Since $P \not \subseteq C_1$, either $P \cap A_1 \neq \emptyset$ or $P \cap B_1 \neq \emptyset$; by symmetry, assume that $P \cap A_1 \neq \emptyset$. 
If $P \cap B_1 = \emptyset$, let $P_1 = P \cup M_2$. If $P \cap B_1 \neq \emptyset$, let $P_1 = P$.
 Let $U_1'' = N(P_1) \cap W_1 \cap G_1$ and $U_2'' = N(P_1) \cap W_2 \cap G_1$. Note that in both cases, the second condition of Lemma \ref{lem:helper2} holds (in the first case, because $M_2 \subseteq P_1$ and so $\{a_2, b_2\} \subseteq X_1'' \cap X_2''$, and in the second case, because $\{a_2, b_2\} \subseteq N(P_1) \subseteq W_1 \cup W_2$). By Lemma \ref{lem:construct-precoloring}, there exist $U_1', U_2'$ such that $U_1' \cap U_2' = V(P_1)$, $U_1' \cup U_2' = N[P_1]$, and $G_1[U_1']$ and $G_1[U_2']$ are chordal. Let $(X_1', X_2')$ be a chordal cover of $G$ that extends $(P_1, U_1', U_2')$. 

Let $X_1 = (Z_1 \cap X_1') \cup (Z_2 \cap X_1'')$ and $X_2 = (Z_1 \cap X_2') \cup (Z_2 \cap X_2'')$. By Lemma \ref{lem:helper2}, $(X_1, X_2)$ is a chordal cover of $G$. By the construction of $X_1$ and $X_2$, it follows that $X_1 \cap X_2 = V(P)$, $W_1 \subseteq X_1$, and $W_2 \subseteq X_2$. Now, $(X_1, X_2)$ is a chordal cover of $G$ that extends $(P, W_1, W_2)$, contradicting that $G$ is non-FPE. This proves \eqref{P-in-C1}. \\

By \eqref{P-in-C1}, $P \subseteq C_1$. Therefore, $W_1, W_2 \subseteq Z_1$. Let $(X_1', X_2')$ be a chordal cover of $G_1$ that extends $(P, W_1, W_2)$. Let $P_2 = M_1$, let $W_1'' = ((\{a_2, b_2\}) \cap X_1') \cup M_1$, and let $W_2'' = ((\{a_2, b_2\}) \cap X_2') \cup M_1$. Note that since $X_1'$ and $X_2'$ are chordal, it follows that $G_2[W_1''
$ and $G_2[W_2'']$ are chordal. By Lemma \ref{lem:construct-precoloring}, there exist sets $W_1', W_2'$ such that $W_1' \cap W_2' = V(P_2)$, $W_1' \cup W_2' = N[P_2]$, and $G_2[W_1']$ and $G_2[W_2']$ are chordal. Let $(X_1'', X_2'')$ be a chordal cover of $G_2$ that extends $(P_2, W_1', W_2')$. Let $X_1 = (Z_1 \cap X_1') \cup (Z_2 \cap X_1'')$ and $X_2 = (Z_1 \cap X_2') \cup (Z_2 \cap X_2'')$. The conditions of Lemma \ref{lem:helper2} are satisfied by the construction of $P_2$, $W_1''$, and $W_2''$, so by Lemma \ref{lem:helper2}, $(X_1, X_2)$ is a chordal cover of $G$ that extends $(P, W_1, W_2)$, contradicting that $G$ is non-FPE. This completes the proof. 
\end{proof}

Finally, we prove the main result of this section:

\begin{thm}
\label{thm:MNFPE}
    Let $G$ be an even-hole-free graph with no sector wheel and no star cutset. If every proper induced subgraph of $G$ with no star cutset is FPE, then $G$ is FPE.  
\end{thm}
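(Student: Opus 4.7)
The plan is to apply the decomposition theorem, Theorem~\ref{thm:decomposition}, to $G$. Since $G$ has no star cutset, $G$ falls into one of the following five cases: $G$ is a clique; $G$ is a hole; $G$ is a pyramid; $G$ is an extended nontrivial basic graph; or $G$ admits a 2-join. I will verify that $G$ is FPE in each case.

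The first three cases are handled by direct arguments. If $G$ is a clique, then $G$ is chordal and every subset of $V(G)$ induces a chordal graph, so any precover extends immediately. If $G$ is a hole, then $G$ is odd since $G$ is even-hole-free; for any flat path $P$, the set $V(G)\setminus N[P]$ is an induced sub-path of the cycle, which I split between $X_1$ and $X_2$ consistently with how the two vertices of $N[P]\setminus V(P)$ are distributed between $W_1$ and $W_2$, so that each $X_i$ is a disjoint union of sub-paths of the cycle and therefore chordal. If $G$ is a pyramid, a case analysis based on the location of $V(P)$ (contained in a single path $P_i$, containing the apex, or meeting the base triangle), together with a careful distribution of the remaining vertices between $X_1$ and $X_2$, yields an explicit chordal cover extending $(P,W_1,W_2)$.

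If $G$ admits a 2-join, Lemma~\ref{lem:2-join} applies directly: under the assumption that every proper induced subgraph of $G$ with no star cutset is FPE, no such graph that is non-FPE can admit a 2-join, and so $G$ must be FPE. If $G$ is an extended nontrivial basic graph, Lemma~\ref{lem:basic} gives that $G$ is not MNFPE; combined with the inductive hypothesis (smaller proper induced subgraphs of $G$ with no star cutset are FPE), this yields that $G$ itself is FPE.

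The main obstacle is the extended basic graph case. Lemma~\ref{lem:basic} literally concludes only that $G$ is not MNFPE, while the definition of MNFPE requires \emph{every} proper induced subgraph to be FPE, whereas our hypothesis guarantees this only for subgraphs without a star cutset. To close this gap, I would verify that the specific proper induced subgraphs invoked in the proof of Lemma~\ref{cliqueplussingleton} (namely $G\setminus\{u\}$ for a nearly simplicial vertex $u\notin N[P]$) inherit the no-star-cutset property when $G$ is an extended nontrivial basic graph with no star cutset, so that the inductive hypothesis applies to them. The pyramid case, while direct in principle, requires careful enumeration of all flat paths and distributions of the base/path vertices to guarantee that both sides of the cover remain chordal.
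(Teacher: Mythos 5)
Your proposal follows essentially the same route as the paper: apply Theorem~\ref{thm:decomposition}, dispatch the clique, hole, and pyramid cases by direct constructions, rule out a 2-join via Lemma~\ref{lem:2-join}, and invoke Lemma~\ref{lem:basic} for extended nontrivial basic graphs. The subtlety you flag in the last case (Lemma~\ref{lem:basic} literally concludes only ``not MNFPE,'' whereas the hypothesis here controls only the star-cutset-free proper induced subgraphs) is genuine and is glossed over in the paper's own one-line treatment of that case, so your plan to verify that the subgraphs used in Lemma~\ref{cliqueplussingleton} inherit the no-star-cutset property is a sensible repair rather than a departure from the paper's argument.
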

\begin{proof}
 We apply Theorem \ref{thm:decomposition} to $G$. If $G$ is a clique, then $G$ is chordal, so every precover of $G$ can be arbitrarily extended to a chordal cover of $G$ and thus $G$ is FPE. If $G$ is a hole, then every precover of $G$ can be extended to a chordal cover $(X_1, X_2)$ of $G$ by ensuring that both $X_1 \setminus X_2$ and $X_2 \setminus X_1$ are non-empty, so $G$ is FPE. Suppose $G$ is a pyramid with base $b_1b_2b_3$, apex $a$, and paths $P_1, P_2, P_3$. Let $P$ be a witness path of $G$. Up to symmetry, we may assume that $P$ is contained in $P_1$. Then, every precover of $G$ with witness path $P$ can be extended to a chordal cover $(X_1, X_2)$ by ensuring that both $P_2$ and $P_3$ meet both $X_2 \setminus X_1$ and $X_1 \setminus X_2$. It follows that $G$ is FPE. 

 Therefore, we may assume that either $G$ is an extended nontrivial basic graph or $G$ admits a 2-join. By Lemma \ref{lem:2-join}, $G$ does not admit a 2-join. If $G$ is an extended nontrivial basic graph, then $G$ is FPE by Lemma \ref{lem:basic}.  This completes the proof. 
\end{proof}

\section{Graphs with a clique cutset} 

In this section, we prove that even-hole-free graphs with no sector wheels that have a clique cutset are minimal non-weakly FPE. 
\label{sec:clique}
\begin{lem}\label{staypath}
Let $G$ be a graph. Suppose $G$ has a  clique cutset $Q$, and let $C$ be a component of $G-Q$. Let  $G' = G[C \cup Q]$ and $G''=G-C$. If $P$ is a flat path in $G$, then $P\cap G'$ and $P\cap G''$ are paths or empty.
\end{lem}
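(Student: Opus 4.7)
The plan is to use the structure of $Q$ (clique) together with the inducedness of $P$ (flat paths are induced by definition) to show that the vertices of $P$ lying in $Q$ form a contiguous block of $P$, and then exploit the fact that $Q$ is a cutset of $G$.

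Write $P = p_1 \dd \hdots \dd p_k$ and let $q_1 < q_2 < \cdots < q_m$ be the indices with $p_{q_i} \in Q$ (possibly $m=0$). I would first observe that these indices are consecutive integers. Indeed, if $q_a$ and $q_b$ satisfied $q_b \geq q_a + 2$, then $p_{q_a}$ and $p_{q_b}$ would be non-consecutive vertices of the induced path $P$, hence non-adjacent, contradicting the fact that $Q$ is a clique. Thus $P \cap Q$ is either empty or a subpath $p_{q_1} \dd \hdots \dd p_{q_m}$ occupying a contiguous range of indices of $P$.

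Next I would consider the (at most two) maximal subpaths of $P$ whose indices lie outside $\{q_1, \dots, q_m\}$; call them the \emph{left piece} (indices $< q_1$) and the \emph{right piece} (indices $> q_m$). Each piece is connected in $G$ and disjoint from the cutset $Q$, so each piece is entirely contained in a single connected component of $G - Q$. In particular, each piece is either contained in $C$ or disjoint from $C$. Since $G' = G[C \cup Q]$, the set $P \cap G'$ equals the union of the $Q$-block of $P$ together with whichever of the left/right pieces happen to lie in $C$; the $Q$-block is contiguous in $P$ and each such piece is adjacent (in $P$) to the $Q$-block, so the union is a subpath of $P$ (or empty, if $m = 0$ and no piece lies in $C$, in which case $P$ itself lies in a single component of $G-Q$ and the conclusion is immediate). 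The symmetric argument handles $P \cap G'' = G - C$, noting that the pieces lying outside $C$ are joined to the $Q$-block in $P$.

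I do not anticipate a genuine obstacle: once one notices that a clique cutset together with inducedness forces $P \cap Q$ to be a contiguous block, the remainder is direct bookkeeping with the components of $G - Q$. The mild subtleties are only corner cases: when $P \cap Q = \emptyset$ (in which case $P$ itself lies in one component of $G-Q$), and when one or both of the left/right pieces is empty. Note that flatness of $P$ is not actually needed for the statement; only the fact that $P$ is induced is used.
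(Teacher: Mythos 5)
Your proof is correct and rests on the same key observation as the paper's (one-line) proof: two vertices of the induced path $P$ lying in the clique $Q$ must be adjacent, hence consecutive on $P$, which forces $P \cap Q$ to be contiguous and the pieces of $P$ outside $Q$ to each sit in a single component of $G - Q$. Your version simply writes out the bookkeeping that the paper leaves implicit, and your remark that only inducedness (not flatness) of $P$ is used is accurate.
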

\begin{proof}
If not, then there exists two non-adjacent vertices in $Q$, a contradiction. 
\end{proof}

\begin{lem}
\label{lem:no-clique-cutset}

Let $G$ be a minimal non-weakly FPE graph. Then, $G$ does not have a clique cutset. 
\end{lem}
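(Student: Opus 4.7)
The plan is to argue by contradiction: suppose $G$ is minimal non-weakly FPE with a clique cutset $Q$, and let $(P, W_1, W_2)$ be a witness precover (so $|V(P)| \le 2$). Choose a component $C$ of $G - Q$ so that $V(P) \subseteq V(G')$ where $G' := G[C \cup Q]$ (possible since $V(P)$ is connected of size $\le 2$, so $V(P) \setminus Q$ lies in a single component of $G - Q$), and set $G'' := G - C$. Both $G'$ and $G''$ are proper induced subgraphs of $G$, hence weakly FPE by minimality. First I would apply weakly FPE of $G'$ to the restricted precover $(P, W_1 \cap V(G'), W_2 \cap V(G'))$ to obtain a chordal cover $(X_1', X_2')$ of $G'$, and write $Q_i := X_i' \cap Q$, so that $Q_1 \cup Q_2 = Q$ and $Q_1 \cap Q_2 = V(P) \cap Q$. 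The argument then splits according to whether $V(P)$ meets $Q$.

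\emph{Case 1: $V(P) \cap Q \neq \emptyset$.} Since $Q$ is a clique containing a vertex of $V(P)$, every vertex of $Q$ lies in $N_G[V(P)] = W_1 \cup W_2$, and it follows that $Q_i = W_i \cap Q$. Setting $P'' := V(P) \cap Q$ and $W_i'' := W_i \cap V(G'')$, a short check (using that vertices of $V(P) \setminus Q \subseteq C$ have no neighbors in $V(G'') \setminus Q$) shows $(P'', W_1'', W_2'')$ is a valid precover of $G''$. Weakly FPE of $G''$ yields $(X_1'', X_2'')$, which by the same argument satisfies $X_i'' \cap Q = Q_i$. Define $X_i := X_i' \cup X_i''$; the four cross-intersections between $X_1', X_2', X_1'', X_2''$ all collapse to subsets of $V(P)$, so $X_1 \cap X_2 = V(P)$, and plainly $X_1 \cup X_2 = V(G)$ and $W_i \subseteq X_i$. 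Chordality of $G[X_i]$ follows from a standard clique-cutset hole argument: any induced cycle meeting both sides of the cutset has $H \cap Q \subseteq Q_i$ (a clique), so $H$ contains at most two (necessarily consecutive) vertices of $Q_i$, forcing $H$ to lie entirely in $X_i'$ or $X_i''$, a contradiction.

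\emph{Case 2: $V(P) \cap Q = \emptyset$, so $V(P) \subseteq C$.} Now $Q_1 \cap Q_2 = \emptyset$; since $Q$ is nonempty, pick $v \in Q$ and let $i \in \{1, 2\}$ be such that $v \in Q_i$, with $j := 3 - i$. Construct a precover of $G''$ with path $\{v\}$ by $W_i'' := Q_i$ and $W_j'' := Q_j \cup \{v\} \cup T$, where $T := N_G(v) \cap (V(G'') \setminus Q)$. Routine checks yield $W_1'' \cap W_2'' = \{v\}$, $W_1'' \cup W_2'' = Q \cup T = N_{G''}[v]$, and $G''[W_i''] = G[Q_i]$ is a clique; the crux is chordality of $G''[W_j'']$, which uses the no-sector-wheel hypothesis: $v$ is complete to $W_j'' \setminus \{v\} = Q_j \cup T$, so any induced cycle $H$ of length $\ge 4$ in $W_j''$ either contains $v$ (and then $v$ has more than two neighbors on $H$, producing a chord) or avoids $v$, in which case $(H, v)$ is a universal wheel in $G$, contradicting the hypothesis. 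Apply weakly FPE of $G''$ to get $(X_1'', X_2'')$ with $X_i'' \cap Q = Q_i$ and $X_j'' \cap Q = Q_j \cup \{v\}$. A naïve glue would double-cover $v$, so set $X_i := X_i' \cup X_i''$ and $X_j := X_j' \cup (X_j'' \setminus \{v\})$; since $v \in Q_i$ lies outside $X_j'$, this removes $v$ from $X_j$, and the four cross-intersections again collapse to $V(P)$. Chordality of $G[X_1]$ and $G[X_2]$ follows by the same clique-cutset hole argument as in Case 1, with $X_i \cap Q = Q_i$ and $X_j \cap Q = Q_j$ (both cliques).

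The main obstacle is Case 2: the chordality of the auxiliary precover $G''[W_j'']$ depends essentially on the no-sector-wheel hypothesis (to preclude a universal wheel at $v$), and one must perform the small surgery of removing $v$ from $X_j''$ to restore the intersection condition $X_1 \cap X_2 = V(P)$—chordality is preserved automatically since subsets of chordal graphs are chordal. In both cases the resulting $(X_1, X_2)$ is a chordal cover of $G$ extending $(P, W_1, W_2)$, contradicting the choice of $(P, W_1, W_2)$ as a witness to the failure of weak FPE.
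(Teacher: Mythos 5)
Your overall strategy is the same as the paper's: split $G$ along the clique cutset $Q$ into $G'=G[C\cup Q]$ and $G''=G-C$, invoke minimality to get chordal covers of each piece that agree on $Q$, and glue using the standard observation that a hole meeting a clique cutset in a clique must lie entirely on one side. Your Case~1 (when $V(P)$ meets $Q$) is correct and corresponds to the paper's case in which $W_1\cup W_2$ meets every component. The substantive difference is your Case~2. There, to seed a precover of $G''$ at a vertex $v\in Q$, you are forced to place all of $T=N_{G''}(v)\setminus Q$ into one side $W_j''$ so that $W_1''\cup W_2''=N_{G''}[v]$, and you then need $G''[Q_j\cup\{v\}\cup T]$ to be chordal; you obtain this by ruling out a universal wheel centered at $v$. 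That step uses the no-sector-wheel hypothesis, which is \emph{not} among the hypotheses of the lemma as stated: the lemma is asserted for an arbitrary minimal non-weakly-FPE graph. So, as written, you have proved a weaker statement. (In the paper's applications $G$ is always (even hole, sector wheel)-free, so your version would still suffice downstream, but it does not establish the lemma in the generality claimed.) The paper instead seeds the second piece with $W_i'=(X_i\cap Q)\cup\{v\}$, avoiding any appeal to wheels; to be fair, your construction is more scrupulous about the requirement $W_1''\cup W_2''=N_{G''}[v]$ in the definition of weak FPE, which is exactly the point where the extra hypothesis sneaks in. If you want to match the lemma as stated, you need an argument for handling $N_{G''}(v)\setminus Q$ that does not presuppose $N[v]$ is chordal. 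The gluing computations, the verification that the restricted sets form valid precovers, and the cross-intersection bookkeeping (including the removal of $v$ from $X_j''$) are all correct.
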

\begin{proof}
Let $Q$ be a clique cutset. Let $P$ be  a flat path witnessing $G$, and let $W_1,W_2$ be the corresponding witness sets. First, we prove:

\sta{\label{clique-helper} Let $Q$ be a clique cutset, let $C$ be a component of $G - C$, let $G' = G[C \cup Q]$, and let $G'' = G - C$. Suppose $X \subseteq G'$ is chordal and $Y \subseteq G''$ is chordal. Then, $G[X \cup Y]$ is chordal.}

Suppose there is an induced cycle $T$ in $G[X \cup Y]$. Then, $|T \cap Q| \geq 2$, otherwise $T \subseteq X$ or $T \subseteq Y$. Since $Q$ is a clique, it follows that $|T \cap Q| = 2$, and since $T$ is a cycle, it follows that $T \setminus Q \subseteq X$ or $T \setminus Q \subseteq Y$. But $T \cap Q \subseteq X \cap Y$, so $T \subseteq X$ or $T \subseteq Y$, contradicting that $X$ and $Y$ are chordal. This proves \eqref{clique-helper}. \\

First, suppose there exists a component $C$ of $G - Q$ such that $(W_1 \cup W_2) \cap C = \emptyset$. Let  $G' = G[C \cup Q]$ and $G''=G-C$. Then, $W_1\cup W_2\subseteq V(G'')$, and $P\cap G''$ is a flat path.  By the induction hypothesis, the chordal cover $W_1\cup W_2$  can be extended to a chordal cover $X_1\cup X_2$ of $G''$. Choose a vertex $v\in Q$ and think of $v$ as a flat path $P'$ and of $Q-v$ as a subset of $N[P']$ in $G'$. 
Let $W'_i=(X_i\cap Q)\cup \{v\}$, for $i=1,2$. 
By the induction hypothesis $W'_1\cup W'_2$ is extendable to a chordal cover $Y_1\cup Y_2$  of  $G'$.  Remove $v$ from $Y_i$ if it is not in $X_i$, for $i=1,2$. By \eqref{clique-helper}, $G[X_i\cup Y_i]$ is chordal for $i=1,2$. Now, $(P, X_1 \cup Y_1, X_2 \cup Y_2)$ is a chordal cover of $G$ that extends $(P, W_1, W_2)$, a contradiction. 

Therefore, we may assume that $W_1\cup W_2$
intersects every component of $G-Q$. Let $C$ be a component of $G - Q$, let $G' = G[C \cup Q]$, and let $G'' = G - C$. By Lemma \ref{staypath}, 
 $P'= P\cap G'$ and  $P''= P\cap G''$ are flat paths. For $i=1,2$, let $W'_i = W_i \cap V(G')$, $W''_i = W_i \cap V(G'')$. By the induction hypothesis,  $W'_1\cup W'_2$  and $W''_1\cup W''_2$  can be extended to chordal covers $X_1\cup X_2$ and $Y_1\cup Y_2$ of $G'$ and $G''$, respectively.
The sets $G[X_i\cup Y_i]$ are chordal for $i = 1, 2$ by \eqref{clique-helper}. It follows that $(P, X_1 \cup Y_1, X_2 \cup Y_2)$ is a chordal cover of $G$ that extends $(P, W_1, W_2)$, a contradiction. This completes the proof. 
 \end{proof}

\section{Graphs with a proper star cutset}\label{sec:star-cutset}

In this section we prove that even-hole-free graphs with no sector wheels that have proper star cutsets are minimal non-weakly FPE. 
We begin with a few useful lemmas.

\begin{lem}
\label{lem:v-not-full}
Let $G$ be minimal non-weakly FPE and let $v \in V(G)$. Then, $v$ is not complete to $G \setminus \{v\}$. 
\end{lem}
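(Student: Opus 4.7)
The plan is to argue by contradiction: assume there is some $v \in V(G)$ that is complete to $V(G) \setminus \{v\}$, and then show that in fact every precover $(P, W_1, W_2)$ of $G$ (with $P$ of length zero or one) can be extended to a chordal cover of $G$, contradicting the hypothesis that $G$ is not weakly FPE.

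Fix such a universal vertex $v$ and a precover $(P, W_1, W_2)$ of $G$. I will split into two simple cases. First, if $v \in V(P)$, then since $v$ is universal we have $N[P] \supseteq N[v] = V(G)$, and so $W_1 \cup W_2 = N[P] = V(G)$; thus $(W_1, W_2)$ is itself a chordal cover of $G$ extending $(P, W_1, W_2)$ and we are done. Second, if $v \notin V(P)$, then again $v \in N[P]$, so up to symmetry we may assume $v \in W_1 \setminus W_2$. Consider the proper induced subgraph $G' := G \setminus \{v\}$. Setting $W_1' = W_1 \setminus \{v\}$ and $W_2' = W_2$, the triple $(P, W_1', W_2')$ is a precover of $G'$: the chordality of $G'[W_1'] = G[W_1] \setminus \{v\}$ follows from that of $G[W_1]$, and $W_1' \cap W_2' = V(P)$, $W_1' \cup W_2' = N_{G'}[P]$. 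Since $G$ is minimal non-weakly FPE, $G'$ is weakly FPE, so this precover extends to a chordal cover $(X_1', X_2')$ of $G'$.

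I then propose to set $X_1 = X_1' \cup \{v\}$ and $X_2 = X_2'$, and verify that $(X_1, X_2)$ is a chordal cover of $G$ extending $(P, W_1, W_2)$. All set-theoretic conditions ($X_1 \cup X_2 = V(G)$, $X_1 \cap X_2 = V(P)$, $W_i \subseteq X_i$) follow directly from the corresponding conditions for $(X_1', X_2')$ together with $v \notin V(G')$ and $v \in W_1 \setminus W_2$. The only non-trivial check is that $G[X_1]$ is chordal. The key observation here is that adding a universal vertex to a chordal graph preserves chordality: any induced cycle $C$ of length at least $4$ in $G[X_1]$ either avoids $v$, in which case $C \subseteq G'[X_1']$ contradicting chordality of $X_1'$, or contains $v$, in which case $v$ would have at least three neighbors on $C$ (since $v$ is complete to $X_1 \setminus \{v\}$ and $|C| \geq 4$), contradicting that $v$ has exactly two neighbors on an induced cycle. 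Since such an extension exists for every precover, $G$ is weakly FPE, contradicting minimality.

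This argument is essentially routine; the main (mild) obstacle is simply keeping the bookkeeping between $G$ and $G \setminus \{v\}$ clean, and noting explicitly why adding back the universal vertex $v$ to the chordal side $X_1'$ cannot create an induced cycle of length $\geq 4$.
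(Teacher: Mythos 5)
Your proof is correct and follows essentially the same route as the paper's: deduce that the universal vertex $v$ cannot lie on the witness path (else $N[P]=V(G)$ and the witness sets already form a chordal cover), delete $v$, extend the restricted precover in the weakly FPE graph $G\setminus\{v\}$, and add $v$ back to the side containing it, using that attaching a universal vertex preserves chordality. The only difference is that you spell out the bookkeeping slightly more explicitly than the paper does.
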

\begin{proof}
Let $P$ be the witness path and $(W_1, W_2)$ be the witness sets for $G$. Suppose for the sake of contradiction that $v$ is complete to $G' = G \setminus \{v\}$. Since $G$ is minimal non-weakly FPE, $G = N[v]$, and $W_1 \cup W_2 = N[P]$, it follows that $v \not \in P$. Since $G$ is minimal non-weakly FPE and $G'$ is a proper induced subgraph of $G$, it follows that $G'$ is weakly FPE. Since $P \subseteq V(G')$, there exists a chordal cover $(X_1, X_2)$ of $G'$ that extends $(P, W_1 \setminus \{v\}, W_2 \setminus \{v\})$. 
Now, since $v$ is complete to $G'$ we have $v \in N[P]$, and we may assume up to symmetry that $v \in W_1$. Since $v$ is complete to $X_1$ and $X_1$ is chordal, it follows that $X_1 \cup \{v\}$ is chordal, so $(X_1 \cup \{v\}, X_2)$ is a chordal cover of $G$ that extends $(P, W_1, W_2)$, a contradiction. 
\end{proof}

\begin{lem}
\label{lem:one-comp-anticomplete}
Let $G$ be a graph and let $X \subseteq V(G)$ be a subset of its vertex set such that there exists a vertex $u \in V(G)$ anticomplete to $X$. Suppose there exists a cutset $Y$ of $G$ such that $X \subseteq Y \subseteq N[X]$. Then, there exists a cutset $Y'$ of $G$ such that $X \subseteq Y' \subseteq N[X]$ and at least one component of $G \setminus Y'$ is anticomplete to $X$. 
\end{lem}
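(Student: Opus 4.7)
The plan is to take the given cutset $Y$ and enlarge it by absorbing into it precisely those vertices of $N[X]$ that lie in the component of $G \setminus Y$ containing $u$. Since $u$ is anticomplete to $X$, we have $u \notin N[X]$, and in particular $u \notin Y$; let $C_u$ denote the component of $G \setminus Y$ containing $u$, and set
$$Y' := Y \cup (C_u \cap N[X]).$$
I will then verify the three required properties for $Y'$: the inclusions $X \subseteq Y' \subseteq N[X]$, the fact that $Y'$ is a cutset of $G$, and the existence of a component of $G \setminus Y'$ anticomplete to $X$.

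The inclusions are immediate: $X \subseteq Y \subseteq Y'$ and $Y' \subseteq Y \cup N[X] = N[X]$, using the hypothesis $Y \subseteq N[X]$. For the anticompleteness property, observe that since $G \setminus Y' \subseteq G \setminus Y$, the connected component of $u$ in $G \setminus Y'$ is contained in $C_u$; by construction it avoids $C_u \cap N[X]$, so it sits inside $C_u \setminus N[X]$. By definition of $N[X]$, no vertex outside $N[X]$ can have a neighbor in $X$, and therefore this component is anticomplete to $X$, as required.

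The step that needs a short argument is that $Y'$ remains a cutset. Since $Y$ is a cutset, $G \setminus Y$ has at least one further component $C$ distinct from $C_u$. Our enlargement only adds vertices from $C_u$, so $C \subseteq G \setminus Y'$ and $C$ is still connected there; since $Y' \supseteq Y$, there are no edges from $C$ to $C_u \setminus N[X]$, so $C$ and the component of $u$ lie in distinct components of $G \setminus Y'$. The only way this could fail is if the $u$-side of $G \setminus Y'$ became empty, but $u \in C_u \setminus N[X]$ guarantees that it survives the enlargement. The main (small) obstacle is thus to observe that $u \notin N[X]$, which is exactly what the anticompleteness hypothesis gives us; modulo this, the lemma reduces to a one-line trim of the original cutset.
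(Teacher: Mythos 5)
Your proof is correct and follows essentially the same approach as the paper: both take $Y'$ to be $Y$ together with the neighbors of $X$ inside the component of $G\setminus Y$ containing $u$, and observe that the resulting component of $u$ avoids $N[X]$ and hence is anticomplete to $X$. Your write-up is in fact slightly more careful than the paper's, since you explicitly verify that $Y'$ remains a cutset.
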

\begin{proof}
Let $C_1, \hdots, C_m$ be the components of $G \setminus Y$. Since $u$ is anticomplete to $X$, it follows that $u \not \in Y$, and so we may assume up to symmetry that $u \in C_1$. Let $Y' = Y \cup \left(\bigcup_{v \in X} (N(v) \cap C_1)\right)$.  Let $C_u$ be the component of $G \setminus Y'$ containing $u$. Now, $C_u$ is anticomplete to $X$. This completes the proof. 
\end{proof}

A {\em twin wheel} consists of a hole $H$ and a vertex $v$ such that $H \cap N(v)$ is a three-vertex path. A {\em short pyramid} consists of a hole $H$ and a vertex $v$ such that $H \cap N(v)$ is an edge plus an isolated vertex. For a path $P = p_1 \dd \hdots \dd p_k$, let $P^*$ denote the {\em interior} of $P$; that is, $P^* = P \setminus \{p_1, p_k\}$. A wheel is {\em proper} if it is not a twin wheel or a short pyramid. A wheel $(H, v)$ is  {\em universal} if $v$ is complete to $H$. 
A {\em sector} of a wheel $(H, v)$ is a path $P \subseteq H$ such that $v$ is complete to the ends of $P$ and anticomplete to the interior of $P$. A sector is {\em long} if it has length greater than one. A wheel $(H, v)$ is called an {\em even wheel} if $|N(v) \cap H|$ is even. If $H$ is a graph, then we say that $G$ {\em contains $H$} if $G$ has an induced subgraph isomorphic to $H$. 

The following is well-known; we include a proof for completeness. 

\begin{lem}
\label{lem:no-even-wheel}
Let $G$ be a graph with no even hole. Then, $G$ does not contain an even wheel. 
\end{lem}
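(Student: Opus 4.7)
The plan is to carry out the standard sector-counting argument. Suppose toward a contradiction that $G$ contains an even wheel $(H,v)$, and let $k = |N(v) \cap H|$, so $k$ is even and (by the definition of a wheel) $k \geq 3$, hence $k \geq 4$. Write the neighbors of $v$ on $H$ in cyclic order as $u_1,\dots,u_k$. These partition the edge set of $H$ into $k$ sectors $S_1,\dots,S_k$, where $S_i$ is the subpath of $H$ from $u_i$ to $u_{i+1}$ (indices mod $k$) through the non-neighbors of $v$. Let $\ell_i$ denote the length of $S_i$; note $\ell_i \geq 1$.

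Next, I would check that each sector forces a parity constraint. Fix $i$, and consider the cycle $C_i$ formed by $S_i$ together with the two-edge path $u_i\text{-}v\text{-}u_{i+1}$; this cycle has length $\ell_i + 2$. If $\ell_i \geq 2$, then $C_i$ is an induced cycle: the only vertex of $H$ adjacent to $v$ among those in $C_i$ are $u_i$ and $u_{i+1}$ (by the definition of sector), and $S_i$ is induced since it is a subpath of the hole $H$. Thus $C_i$ is a hole, and because $G$ has no even hole, $\ell_i + 2$ is odd, so $\ell_i$ is odd. If $\ell_i = 1$, then $\ell_i$ is already odd. In either case every $\ell_i$ is odd.

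The total length of $H$ is $\sum_{i=1}^k \ell_i$, which is a sum of $k$ odd numbers. Since $k$ is even, this sum is even, so $H$ itself is a hole of even length, contradicting the assumption that $G$ has no even hole.

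I do not expect any real obstacle here: the only thing to verify carefully is that $C_i$ is induced when $\ell_i \geq 2$, and this is immediate from the definitions of sector and hole. The whole argument is only a few lines and uses nothing beyond the parity of the sum of sector lengths.
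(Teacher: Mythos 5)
Your proof is correct and follows essentially the same sector-counting argument as the paper: each sector has odd length (long sectors because adding $v$ yields a hole, short sectors trivially), and an even number of odd sector lengths sums to an even hole. No issues.
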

\begin{proof}
Suppose $G$ contains an even wheel $(H, v)$, and  suppose $S$ is a long sector of $(H, v)$.  Then, $S \cup \{v\}$ is a hole of $G$ whose length is the same parity as the length of $S$. It follows that every long sector is of odd length. Since sectors that are not long are of length one, it follows that every sector of $(H, v)$ is of odd length. Since $(H, v)$ is an even wheel, $(H, v)$ has an even number of sectors. But now $H$ is even, a contradiction. 
\end{proof}

The following lemma describes star cutsets that come from proper wheel centers. 

\begin{lem}[\cite{Addario-Berry2008BisimplicialGraphs, daSilva2013Decomposition2-joins}]
\label{lem:proper-wheel} Let $G$ be a graph with no even hole that contains a proper wheel $(H, x)$
that is not a universal wheel. Let $x_1$ and $x_2$ be the endpoints of a long sector $Q$ of $(H, x)$. Let $W$
be the set of all vertices $h \in H \cap N(x)$ such that the subpath of $H \setminus \{x_1\}$ from $x_2$ to $h$ contains
an even number of neighbors of $x$, and let $Z = H \setminus (Q \cup N(x))$. Let $N' = N(x) \setminus W$. Then,
$N' \cup \{x\}$ is a cutset of $G$ that separates $Q^*$
from $W \cup Z$.
\end{lem}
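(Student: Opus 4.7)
The plan is to argue by contradiction: suppose there exists a path in $G \setminus (N' \cup \{x\})$ from $Q^*$ to $W \cup Z$, and produce from it an even hole in $G$.

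First, by Lemma~\ref{lem:no-even-wheel}, $(H, x)$ is not an even wheel, so $|N(x) \cap H|$ is odd and every sector of $(H, x)$ has odd length; in particular $|Q|$ is odd. List $N(x) \cap H$ in cyclic order as $x_1, y_1, \ldots, y_k, x_2$, going from $x_1$ through the complement of $Q$ to $x_2$; then $k$ is odd, and unpacking the definition of $W$ shows $W = \{y_j : j \text{ odd}\}$ while $N' \cap H = \{x_1, x_2\} \cup \{y_j : j \text{ even}\}$. Moreover, if $N' \cup \{x\}$ fails to separate $Q^*$ from $W \cup Z$, it also fails to separate $Q^*$ from $W$: any path landing in some $z \in Z$ can be extended along the sector containing $z$ to its $W$-endpoint, the extension staying inside $G \setminus (N' \cup \{x\})$.

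Let $R = r_0 r_1 \cdots r_s$ be a shortest induced path in $G \setminus (N' \cup \{x\})$ from $Q^*$ to $W$, so $r_s = y_j$ for some odd $j$. The case $s = 1$ would make $r_0 r_s$ a chord of the induced cycle $H$, which is impossible; so $s \ge 2$. Minimality forces $R^*$ to miss $N[x]$, and prevents any $r_i$ with $2 \le i \le s-2$ from having a neighbor in $Q^* \cup W$.

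The central construction is the pair of cycles
\[
C_1 = R \cup \{r_s x, x x_1\} \cup Q[x_1, r_0], \qquad C_2 = R \cup \{r_s x, x x_2\} \cup Q[x_2, r_0],
\]
of lengths $s + b + 2$ and $s + a + 2$ respectively, where $a + b = |Q|$ is odd, so exactly one of $|C_1|, |C_2|$ is even. Since $R^*$ avoids $N[x]$, the chord candidates in $C_i$ are edges from $R^*$ into the interior of $Q[x_i, r_0]$, edges from $R^*$ to $x_i$, and the degenerate edges $y_j x_i$ (which can exist in $H$ only if $|S_0| = 1$ and $j = 1$, or $|S_k| = 1$ and $j = k$); by minimality only $r_1$ can produce $Q^*$-chords. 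I expect the main obstacle to be eliminating these residual chords: the strategy is to choose $r_0$ to be the $Q^*$-neighbor of $r_1$ farthest toward $x_2$ (destroying all $Q^*$-chords in $C_2$), and to deal with the remaining chords by using each such chord to split the relevant cycle into two strictly smaller cycles whose lengths sum to the original length plus two, then invoking an induction on $|V(G)|$ to extract a genuine even hole from the resulting sub-structure. In every case an even hole is forced in $G$, contradicting the hypothesis.
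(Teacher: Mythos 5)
First, note that the paper itself does not prove this lemma: it is imported verbatim from the cited references, so there is no in-paper argument to compare yours against. Judged on its own, your setup is sound and matches the standard approach: the parity facts (every sector odd, $|N(x)\cap H|$ odd, $W$ consisting of alternate spokes), the observation that every sector other than $Q$ has an endpoint in $W$ so that separating $Q^*$ from $W$ suffices, the shortest connecting path $R$ with $R^*\cap N[x]=\emptyset$, and the two cycles $C_1, C_2$ through $x$ whose lengths $s+b+2$ and $s+a+2$ sum to the odd number $2s+4+|Q|$, forcing exactly one to be even. Up to that point everything checks out.

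The gap is the endgame, and it is not a detail. Your mechanism for disposing of chords --- ``split the relevant cycle into two strictly smaller cycles whose lengths sum to the original length plus two'' and extract an even hole by induction --- uses parity in the wrong direction: if the ambient cycle is \emph{even}, the two pieces have lengths summing to an even number and hence have the \emph{same} parity, so both may be odd (a $C_4$ with a chord splits into two triangles), and no even hole is produced. The useful implication runs from odd cycles, and even there the shorter even cycle obtained need not be induced, so no straightforward recursion closes the argument. Moreover, the chords you correctly identify as possible --- edges from $R^*$ to $x_1$ or $x_2$ (which lie in $N'$ and so may neighbor $R$ even though they cannot lie on it), and the degenerate edges $y_jx_i$ when $r_s$ is consecutive on $H$ with $x_1$ or $x_2$ --- are precisely the configurations that occupy most of the published proofs; those arguments handle them by analysing the attachment of $R$ to the hole $Q\cup\{x\}$ and to $H$ and deriving an \emph{even wheel} (excluded by Lemma~\ref{lem:no-even-wheel}) rather than directly an even hole, a step your sketch does not supply. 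Finally, the appeal to ``induction on $|V(G)|$'' is unanchored: the statement concerns a fixed wheel in a fixed graph, and it is not said what smaller instance the inductive hypothesis would be applied to. As written, the proposal is an accurate plan of attack with the decisive step missing.
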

We will also use the following corollary of Lemma \ref{lem:proper-wheel}: 

\begin{lem}
\label{lem:short-pyramid}
Let $G$ be a graph with no even hole and no twin wheel, and let $(H, x)$ be a wheel of $G$. Suppose $x$ is not the center of a star cutset in $G$. Then, $(H, x)$ is a short pyramid. 
\end{lem}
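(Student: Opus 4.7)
The plan is to proceed by contradiction: assume $(H, x)$ is a wheel that is neither a twin wheel (by hypothesis) nor a short pyramid, and deduce that $x$ must be the center of a star cutset of $G$. Under these assumptions, $(H, x)$ is by definition a proper wheel, and the argument splits into two cases according to whether $(H, x)$ is a universal wheel.

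In the case where $(H, x)$ is not universal, I would first observe that at least one sector of $(H, x)$ is long: if every sector had length one, then $x$ would be complete to $H$, contradicting non-universality. Fixing such a long sector $Q$ and applying Lemma~\ref{lem:proper-wheel} to the proper non-universal wheel $(H, x)$ with sector $Q$ produces a set $N' \cup \{x\}$ that is a cutset of $G$ separating $Q^*$ from $W \cup Z$. Since $N' \subseteq N(x)$, this cutset lies inside $N[x]$ and contains $x$, so it is a star cutset centered at $x$, contradicting the hypothesis.

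In the remaining case where $(H, x)$ is universal, $x$ is complete to $H$, and by Lemma~\ref{lem:no-even-wheel}, which rules out even wheels in even-hole-free graphs, the length $|H|$ must be odd and hence at least $5$. I would then exhibit a star cutset at $x$ explicitly: pick $h, h' \in H$ at distance exactly two on $H$, with common $H$-neighbor $h_0$, and set $C = N[x] \setminus \{h, h'\}$. Then $C \subseteq N[x]$ contains $x$, so it suffices to show that $h$ and $h'$ lie in different components of $G \setminus C$. Since every $H$-neighbor of $h$ or $h'$ lies in $C$, any $h$-to-$h'$ path in $G \setminus C$ must traverse only vertices of $V(G) \setminus N[x]$; the idea is to take a shortest such path $h, u_1, \ldots, u_m, h'$ and reach a contradiction by closing it through $H$.

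The main obstacle is the verification in this last step. Using even-hole-freeness, no vertex of $V(G) \setminus N[x]$ may have two non-adjacent neighbors in $H$, because such a vertex together with $x$ would form an induced $C_4$. This bounds the neighbourhood of each $u_i$ in $H$ to at most two consecutive vertices, forces $m \geq 2$, and severely restricts the chord structure of any cycle built from the path. Closing the shortest path through either the two-edge $H$-arc $h, h_0, h'$ or the long arc around $H$ produces two cycles whose lengths differ by $|H|$, which is odd, so one of them is even. A careful chord analysis should then yield either an induced even hole (contradicting even-hole-freeness) or else an induced shorter hole on which $x$ acts as a twin wheel (contradicting the hypothesis), in either case completing the proof.
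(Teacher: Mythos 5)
Your argument for the non-universal case is exactly the paper's proof: a wheel that is neither a twin wheel nor a short pyramid is by definition proper, and Lemma~\ref{lem:proper-wheel} then hands you the star cutset $N'\cup\{x\}\subseteq N[x]$. (Your preliminary observation that a non-universal wheel has a long sector is correct and is implicitly needed to invoke that lemma.) You also correctly noticed something the paper's two-line proof glosses over: Lemma~\ref{lem:proper-wheel} explicitly excludes universal wheels, and a universal wheel on an odd hole of length at least five is a proper wheel that is neither a twin wheel nor a short pyramid, so the stated lemma does require a separate argument for that case. (In the paper's actual applications, in claims~\eqref{w-in-R} and~\eqref{uniq-nbr-in-S}, the center is anticomplete to part of the hole by construction, so the wheels there are never universal; one clean repair is simply to add ``not universal'' to the hypotheses.)

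The problem is that your treatment of the universal case is a sketch whose decisive step is not carried out, and it is precisely the hard step. The setup is fine: $|H|$ is odd and at least $5$ (an even $H$ would itself be an even hole), $hh'\notin E(G)$, no vertex outside $N[x]$ sees two non-adjacent vertices of $H$ (else a $C_4$ through $x$), so a shortest $h$--$h'$ path in $G\setminus C$ has all interior vertices $u_1,\dots,u_m$ outside $N[x]$ with $m\ge 2$, and closing it through $x$ gives a hole forcing $m$ even. But the two cycles you propose to close through the arcs of $H$ need not be induced: each $u_i$ may still be adjacent to an edge of $H$, so chords between $\{u_1,\dots,u_m\}$ and $H\setminus\{h,h'\}$ are entirely possible, and ``one of the two cycles is even'' does not by itself produce an even hole. (Also, their lengths differ by $|H|-4$, not $|H|$, though that difference is still odd.) The most natural attempt to finish --- close through $x$ to get the odd hole $x\dd h\dd u_1\dd\cdots\dd u_m\dd h'\dd x$ and observe that $h_0$ must have a neighbor among the $u_i$ to avoid a twin wheel --- only shows that $h_0$ has a positive even number of such neighbors, which is not yet a contradiction; one has to iterate the argument over the chords to $H$ (or re-minimize the choice of the pair $h,h'$ and the path) before an even hole or twin wheel actually appears. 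So as written the universal case has a genuine gap: the ``careful chord analysis'' is the proof, and it is missing.
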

\begin{proof}
Suppose $(H, x)$ is not a short pyramid. Since $G$ has no twin wheel, it follows that $(H, x)$ is a proper wheel. By Lemma \ref{lem:proper-wheel}, it follows that $x$ is the center of a star cutset in $G$, a contradiction. 
\end{proof}

Next, we prove a helpful lemma about cutsets contained in the neighborhood of witness paths. 

\begin{lem}
\label{lem:star-cutset-helper} Let $G$ be minimal non-weakly FPE, let $P$ be a witness path for $G$ with witness sets $W_1$ and $W_2$, and let $X$ be a cutset of $G$ such that $X \cap P$ is connected and $X \subseteq N[X \cap P]$. Then, no component of $G \setminus X$ is anticomplete to $X \cap P$.
\end{lem}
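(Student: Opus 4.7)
The plan is to argue by contradiction: assume some component $C$ of $G\setminus X$ is anticomplete to $X\cap P$ and construct a chordal cover of $G$ extending $(P,W_1,W_2)$, contradicting that $G$ is non-weakly FPE. Let $G_1=G[C\cup X]$ and $G_2=G\setminus C$; both are proper induced subgraphs of $G$, and hence weakly FPE by minimality. First I would note that $X\cap P$ is nonempty (else $X\subseteq N[\emptyset]=\emptyset$, contradicting that $X$ is a cutset), and that $P\subseteq V(G_2)$ with $N_{G_2}[P]=N_G[P]=W_1\cup W_2$; both facts follow because $P$ is connected and $C$ is anticomplete to $X\cap P$, so any edge of $P$ crossing between $C$ and its complement, or any vertex of $C$ with a neighbor on $P$, would produce a vertex of $C$ adjacent to $X\cap P$.

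Therefore $(P,W_1,W_2)$ is a valid precover of $G_2$, which I would extend to a chordal cover $(Y_1,Y_2)$ of $G_2$. Next set $P'=X\cap P$ (a subpath of $P$ of length $0$ or $1$, hence eligible for weak FPE) and $W_i'=X\cap Y_i$ for $i=1,2$. A routine check shows $(P',W_1',W_2')$ is a precover of $G_1$: the equalities $W_1'\cap W_2'=V(P')$ and $W_1'\cup W_2'=X=N_{G_1}[P']$ follow from $X\subseteq N[P']$ and the anticompleteness of $C$ to $P'$, while $G_1[W_i']$ is chordal as an induced subgraph of $G[Y_i]$. I would then extend this to a chordal cover $(Z_1,Z_2)$ of $G_1$ and set $X_i=Y_i\cup Z_i$. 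Direct computation gives $X_1\cup X_2=V(G)$, $X_1\cap X_2=V(P)$, $W_i\subseteq X_i$, and the identities $X_i\cap V(G_1)=Z_i$, $X_i\cap V(G_2)=Y_i$, with the last two relying on $Z_i\cap X=W_i'=Y_i\cap X$.

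The main obstacle is verifying that each $G[X_i]$ is chordal. By the identities above, any hole contained entirely in $V(G_1)$ lies in $Z_i$ and any hole contained entirely in $V(G_2)$ lies in $Y_i$, both contradicting chordality. The hard case is a hole $H$ of $G[X_i]$ meeting both $C$ and $V(G_2)\setminus X$; such $H$ must satisfy $|H\cap X|\geq 2$ and $H\cap X\subseteq N[P']$. I plan to pick a vertex $p\in V(P')$ and exploit that $p$ is anticomplete to $C$ by rerouting: for any two $x_1,x_2\in H\cap X\cap N(p)$ non-adjacent on $H$, each arc $A$ of $H$ between $x_1$ and $x_2$ produces an induced cycle through $p$ of length $|A|+3$, hence a hole. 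When $|H\cap N(p)|\leq 2$, rerouting the $C$-arc and the $(V(G_2)\setminus X)$-arc this way yields two such holes whose odd lengths combined with the odd length of $H$ (from the no-even-hole hypothesis) produce a parity contradiction. When $|H\cap N(p)|\geq 3$, the pair $(H,p)$ is a wheel whose sector structure is tightly constrained by the no-sector-wheel hypothesis together with Lemmas \ref{lem:no-even-wheel} and \ref{lem:short-pyramid}, and a short sector-level analysis yields the same contradiction. The case $|P'|=2$ is handled analogously using either vertex of $P'$ as the rerouting center.
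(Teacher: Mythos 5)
Your setup is essentially the paper's: you split along the anticomplete component into $G_1=G[C\cup X]$ and $G_2=G\setminus C$, invoke minimality to cover each, and glue. Your sequential construction of the precover for $G_1$ (taking $W_i'=X\cap Y_i$ from an already-chosen cover of $G_2$) is a clean variant of the paper's parallel construction (which restricts $W_1,W_2$ to each piece and uses $X\subseteq N[P]=W_1\cup W_2$ to get consistency on $X$), and your verification that $(P',W_1',W_2')$ is a precover of $G_1$ and that $X_i\cap V(G_1)=Z_i$, $X_i\cap V(G_2)=Y_i$ is correct. Up to the point where you reduce to a hole $H\subseteq X_i$ meeting both $C$ and $V(G_2)\setminus X$, the argument is sound.

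The final step, however, has a genuine gap, and it also overshoots the available hypotheses. The lemma is stated for an arbitrary minimal non-weakly-FPE graph: there is no ``no even hole'' or ``no sector wheel'' assumption, and indeed the lemma is applied inside Lemma \ref{lem:no-full-star-cutset}, where only ``no twin wheel'' is assumed; so a parity argument on hole lengths and a sector-wheel analysis are not legitimate tools here. Moreover, even granting those hypotheses, your case $|H\cap N(p)|\geq 3$ is not carried out: the claim that ``each arc of $H$ between $x_1$ and $x_2$ produces an induced cycle through $p$'' fails for the arc through $V(G_2)\setminus X$, since $p$ may have neighbours in other components of $G\setminus X$ (the conclusion of the lemma itself guarantees this!), and Lemma \ref{lem:short-pyramid} needs $p$ not to be a star-cutset centre, which you do not know. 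None of this is necessary. Let $Q$ be a maximal subpath of $H$ with nonempty interior in $C$; its ends $q_1,q_k$ lie in $N(C)\subseteq X\subseteq N[P']$ and are non-adjacent (otherwise $H\subseteq C\cup X$). Since $C$ is anticomplete to $P'$, the set $Q\cup P'$ contains a hole $\tilde H$, and $\tilde H\subseteq Z_i$ because $Q\subseteq H\cap V(G_1)=X_i\cap V(G_1)=Z_i$ and $P'\subseteq Z_1\cap Z_2$. This contradicts the chordality of $Z_i$ directly, with no appeal to parity, wheels, or forbidden-subgraph hypotheses --- this is exactly the paper's closing move, and you already have every identity needed to make it.
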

\begin{proof}
Let $C_1, \hdots, C_m$ be the components of $G \setminus X$, and suppose for the sake of contradiction that $C_1$ is anticomplete to $X \cap P$. Let $G' = X \cup C_1$ and $G'' = G \setminus C_1$. Note that $X \cap P$ is a flat path in $G'$, $X \cap P \subseteq (W_1 \cap G') \cap (W_2 \cap G')$, and $(W_1 \cap G') \cup (W_2 \cap G') = N_{G'}[X \cap P]$. Similarly,  $X \cap P$ is a flat path in $G''$, $X \cap P \subseteq (W_1 \cap G'') \cap (W_2 \cap G'')$, and $(W_1 \cap G'') \cup (W_2 \cap G'') = N_{G''}[X \cap P]$.  Since $G$ is minimal non-weakly FPE and $G'$ and $G''$ are proper induced subgraph of $G$, it follows that there exists a chordal cover $(X_1', X_2')$ of $G'$ that extends $(X \cap P, W_1 \cap G', W_2 \cap G')$ and a chordal cover $(X_1'', X_2'')$ of $G''$ that extends $(X \cap P, W_1 \cap G'', W_2 \cap G'')$. Let $X_1 = X_1' \cup X_1''$ and let $X_2 = X_2' \cup X_2''$. We claim that $X_1$ and $X_2$ are chordal.

Suppose that there is a hole $H\subseteq X_1$. Since $X_1''$ is chordal, it follows that $H \not \subseteq X_1''$, and so $H \cap C_1 \neq \emptyset$. Let $H' = H \cap N[C_1]$. Since $X_1'$ is chordal, it follows that $H \not \subseteq X_1'$. Since $N[C_1] \subseteq X_1'$, $H \not \subseteq X_1'$, and $H \cap C_1 \neq \emptyset$, it follows that $H'$ contains a path $Q = q_1 \dd \hdots \dd q_k$ with interior $Q^*$ in $C_1$ and ends $q_1,q_k \in N(C_1) \subseteq X\subseteq N[X \cap P]$. 
Now, since $P$ is anticomplete to $Q^*$, $Q \cup P$ contains a hole $\tilde{H}$  and $\tilde{H} \subseteq X_1'$,  contradicting that $(X_1', X_2')$ is a chordal cover of $G'$. Therefore, $X_1$ is chordal, and by symmetry, $X_2$ is chordal. Note that $W_1 \subseteq X_1$ and $W_2 \subseteq X_2$. Thus $(X_1, X_2)$ is a chordal cover of $G$ that extends $(P, W_1, W_2)$, a contradiction. 
\end{proof}

A set $X \subseteq V(G)$ is a {\em full star cutset} if $X$ is a star cutset and $X = N[v]$ for some $v \in V(G)$. If $X = N[v]$ is a full star cutset, the vertex $v$ is called the {\em center} of the full star cutset. A set $X \subseteq V(G)$ is a {\em double star cutset} if there exist $u, 
v \in V(G)$ such that $uv \in E(G)$ and $\{u, v\} \subseteq X \subseteq N[\{u, v\}]$. 

The next lemma is the main result of this section.
\begin{lem}
\label{lem:no-full-star-cutset}
Let $G$ be a graph with no even hole and no twin wheel. Suppose $G$ is minimal non-weakly FPE, and let $P = v_0w_0$ be a witness path of length one with witness sets $W_1$ and $W_2$ such that $W_1 \cup W_2 = N[P]$. Then $G$ does not admit a full star cutset. 
\end{lem}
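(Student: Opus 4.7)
I would proceed by contradiction, assuming $G$ admits a full star cutset $X := N[v]$. Let $C_1, \ldots, C_k$ be the components of $G \setminus X$; by Lemma~\ref{lem:v-not-full} we have $X \neq V(G)$, so $k \geq 2$. Since $P$ is an edge, either $V(P) \subseteq X$ or $V(P) \setminus X$ lies in a single component; relabeling if needed, I will assume $V(P) \setminus X \subseteq C_1$. Set $G' := G[X \cup C_1]$ and $G'' := G \setminus C_1$; both are proper induced subgraphs of $G$, hence weakly FPE by the minimality of $G$. The overarching plan is to produce chordal covers $(X_1', X_2')$ of $G'$ and $(X_1'', X_2'')$ of $G''$ that \emph{agree on $X$}, then glue $X_j := X_j' \cup X_j''$ into a chordal cover of $G$ extending $(P, W_1, W_2)$, contradicting that $P$ is a witness path.

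First I would cover $G'$: since $N[P] \subseteq V(G')$ (any $P$-neighbor outside $X$ lies in the component of its $P$-neighbor, namely $C_1$), the triple $(P, W_1, W_2)$ is a valid precover of $G'$, and weakly FPE yields $(X_1', X_2')$. The restrictions $Y_j := X_j' \cap X$ form a chordal cover of $G[X]$ with $Y_1 \cap Y_2 = V(P) \cap X$ and $Y_1 \cup Y_2 = X$. Next, I would cover $G''$ in a way that matches $(Y_1, Y_2)$ on $X$, by setting up a precover $(Q, U_1, U_2)$ of $G''$ along a flat path $Q \subseteq X$ and applying weakly FPE to $G''$. The natural choice is $Q := P$ when $V(P) \subseteq X$ and $Q := \{v\}$ otherwise; in each case $U_j$ is built from $Y_j$ (with a possible shift of $V(P) \cap X$ to a single side when $Q = \{v\}$, using that $v$ is in exactly one $Y_j$) and extended to $N_{G''}[Q]$ by assigning each boundary vertex in some $C_i$, $i \geq 2$, to the side $U_j$ dictated by its $W_j$-membership.

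The resulting $(X_1, X_2)$ satisfies $X_1 \cap X_2 = V(P)$ and $X_1 \cup X_2 = V(G)$ thanks to the agreement on $X$, so the only remaining task is chordality of $G[X_j]$. This is the main obstacle: an induced cycle $H \subseteq X_j$ cannot lie entirely in $X_j'$ or $X_j''$, so it must have vertices in both $C_1$ and $\bigcup_{i \geq 2} C_i$; as these lie in distinct components of $G \setminus X$, $H$ traverses $X$ at least twice, with its $X$-arcs contained in the chordal set $Y_j$. To rule out such $H$, I would use the absence of even holes (to prevent $C_4$'s across the cutset), the absence of twin wheels (to control wheel configurations with center on $V(P)$), and the special fact that $v$ is universal in $G[X]$ (which allows shortcutting any cross-cutset arc through $v$ into a path in $Y_j$). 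A parallel chordality verification is also required for the precover sets $U_j$ built from $Y_j \cup (W_j \cap C_i)$, and this is where I expect most of the technical work to lie, likely with a case split on the position of $v$ relative to $P$ (namely $v \in V(P)$, $v \in N(V(P)) \setminus V(P)$, or $v \notin N[P]$), since the three cases influence how $Q$ and $U_j$ are set up and the exact shape of the shortcut argument.
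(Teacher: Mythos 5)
There is a genuine gap, and it occurs at the point you flag as ``the main obstacle'': the gluing strategy you describe does not work in the main case of this lemma, and the paper's proof abandons gluing entirely there. First, a concrete error: your justification that $N[P]\subseteq V(G')$ fails whenever an end of $P$ lies in $X=N[v]$, since such a vertex may have neighbors in several components of $G\setminus N[v]$; the paper in fact proves (claim~\eqref{ha} of its proof) that in the surviving case \emph{both} $v_0$ and $w_0$ lie in $N(v)$ and have neighbors in \emph{every} component $C_i$. In that situation your scheme cannot be made consistent: a precover of $G''$ along $Q=\{v\}$ forces $X_1''\cap X_2''=\{v\}$, so $v_0,w_0$ must be shifted to one side of the $G''$-cover, and hence the two covers cannot agree on $X$ while $V(P)$ stays in both sides of the $G'$-cover. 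Concretely, a hole of $G[X_2]$ through $v_0$, a neighbor of $v_0$ in $C_2$, a path in $C_2$, and back into $N(v)$ lies in neither $X_2'$ (its interior leaves $G'$) nor $X_2''$ (it contains $v_0$), and the shortcut hole $Q\cup\{v\}$ then also fails to land inside a single chordal piece. No choice of precovers repairs this, which is presumably why the authors do not attempt it.

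What the paper actually does is use your style of gluing only to establish claim~\eqref{ha} (if some end of $P$ is anticomplete to some component $C_i$, one can glue as you propose, taking $\{v\}$ or $vw_0$ as the flat path of the $G''$-side precover so that $v$ lies in both sides there). Once both ends of $P$ have neighbors in every component, the contradiction is purely structural: using the preliminary claims that $v_0$, $w_0$ are not centers of star cutsets and that $v_0w_0$ is not the center of a double star cutset (proved via Lemmas~\ref{lem:v-not-full}, \ref{lem:one-comp-anticomplete} and~\ref{lem:star-cutset-helper}), the paper builds a path $Q$ through $N(v)$ between two components avoiding $N[\{v_0,w_0\}]$, together with shortest paths $R\subseteq C_1$ and $S\subseteq C_2$ meeting $N(v_0)$ and $N(w_0)$, and then derives a contradiction from Lemma~\ref{lem:short-pyramid}, Lemma~\ref{lem:proper-wheel}, and the non-existence of even wheels (Lemma~\ref{lem:no-even-wheel}). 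This wheel-theoretic endgame is the heart of the lemma and is entirely absent from your proposal; the case analysis on the position of $v$ relative to $P$ that you anticipate would not substitute for it.
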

\begin{proof}  
We start by proving a few claims.

\sta{\label{not-star-cutsets} $v_0$ and $w_0$ are not centers of star cutsets of $G$.}

Suppose $v_0$ is the center of a star cutset $Y \subseteq N[v_0]$
and let $C_1, \hdots, C_m$ be the components of $G \setminus Y$. By Lemma \ref{lem:v-not-full}, $v_0$ is not complete to $G \setminus \{v_0\}$. Thus, applying Lemma \ref{lem:one-comp-anticomplete} with $X=\{v_0\}$, we may assume that $C_1$ is anticomplete to $v_0$. However, by Lemma \ref{lem:star-cutset-helper}, no component of $G \setminus Y$ is anticomplete to $\{v_0\}$, a contradiction. This proves \eqref{not-star-cutsets}.

\sta{\label{not-double-star-cutset}$v_0w_0$ is not the center of a double star cutset of $G$.}

Suppose there exists a cutset $X \subseteq N[\{v_0, w_0\}]$ of $G$ with $\{v_0, w_0\} \subseteq X$ and let $C_1, \hdots, C_m$ be the components of $G \setminus X$. If $G \subseteq N[\{v_0, w_0\}]$, then $(W_1, W_2)$ is a chordal cover of $G$, a contradiction, so $G \not \subseteq N[\{v_0, w_0\}]$.
By Lemma \ref{lem:one-comp-anticomplete}, we may assume that $C_1$ is anticomplete to $\{v_0, w_0\}$. However, by Lemma \ref{lem:star-cutset-helper}, no component of $G \setminus X$ is anticomplete to $\{v_0, w_0\}$, a contradiction. This proves \eqref{not-double-star-cutset}. \\

Suppose for the sake of contradiction that $v \in V(G)$ is the center of a full star cutset $N[v]$ in $G$. By \eqref{not-star-cutsets}, $v \not \in \{v_0, w_0\}$. Let $C_1, \hdots, C_m$ be the connected components of $G \setminus N[v]$. 

\sta{\label{ha} $v_0$ has a neighbor in $C_i$ for $1 \leq i \leq m$. Similarly, $w_0$ has a neighbor in $C_i$ for $1 \leq i \leq m$. } 

First, suppose that $\{v_0, w_0\} \cap N(v) = \emptyset$. We may assume that $\{v_0, w_0\} \subseteq C_1$. Let $G' = C_1 \cup N[v]$ and note that $N[\{v_0, w_0\}] \subseteq G'$. Since $G'$ is a proper induced subgraph of $G$ and $G$ is minimal non-weakly FPE, it follows that $G'$ is weakly FPE. Note that $W_1 \cup W_2\subseteq G'$. Let $(X_1', X_2')$ be a chordal cover of $G'$ that extends $(P, W_1, W_2)$. 

Next, let $G'' = G \setminus C_1$. Let $W_1'' = (X_1' \cap N[v]) \cup \{v\}$ and $W_2'' = (X_2' \cap N[v]) \cup \{v\}$. We think of $v$ as a flat path in $G''$, and note that $W_1'' \cup W_2'' = N[v]$. Since $G''$ is a proper induced subgraph of $G$ and $G$ is minimal non-weakly FPE, it follows that $G''$ is weakly FPE. Let $(X_1'', X_2'')$ be a chordal cover of $G''$ that extends $(v, W_1'', W_2'')$. Let $X_1 = X_1' \cup (X_1'' \setminus \{v\})$ and let $X_2 = X_2' \cup (X_2'' \setminus \{v\})$. We claim that $(X_1, X_2)$ is a chordal cover of $G$ that extends $(P, W_1, W_2)$. Suppose for contradiction that there is a hole $H \subseteq X_1$. Since $X_1'$ and $X_1''$ are chordal, it follows that $H \cap (X_1'' \setminus X_1') \neq \emptyset$ and $H \cap (X_1' \setminus X_1'') \neq \emptyset$. So there exists a path $Q \subseteq X_1'' \setminus X_1'$ in  $H$ with interior in $C_i$ and ends in $N(v)$ for some $1 < i \leq m$.  But now $Q \cup \{v\}$ is a hole in $X_1''$, a contradiction. By the same argument, there is no hole $H \subseteq X_2$. This is a contradiction to the fact that $G$ is minimal non-weakly FPE. Therefore, $\{v_0, w_0\} \cap N(v) \neq \emptyset$, and  we may assume that $w_0 \in N(v)$.  

Suppose $v_0$ is anticomplete to  $C_i$ for some $1 \leq i \leq m$. Let $G' = G \setminus C_i$. Now, $P \subseteq G'$ and $G'$ is a proper induced subgraph of $G$. Since $G$ is minimal non-weakly FPE, it follows that $G'$ is weakly FPE, so there exists a chordal cover $(X_1', X_2')$ of $G'$ that extends $(P, W_1 \cap G', W_2 \cap G')$. Next, let $G'' = C_i \cup N[v]$ and let $P'' = vw_0$.  Let $W_1'' = (N[v] \cap X_1') \cup (W_1 \cap N[w_0] \cap G'') \cup \{v, w_0\}$ and let $W_2'' = (N[v] \cap X_2') \cup (W_2 \cap N[w_0] \cap G'') \cup \{v, w_0\}$. Note that by definition, $W_1'' \cup W_2'' = N[\{v, w_0\}] \cap G''$ and $\{v, w_0\} \subseteq W_1'' \cap W_2''$. Since $G''$ is a proper induced subgraph of $G$, it follows that $G''$ is weakly FPE.  Let $(X_1'', X_2'')$ be a chordal cover of $G''$ that extends $(P'', W_1'', W_2'')$. 

Let $X_1 = X_1' \cup (X_1'' \setminus \{v\})$ and let $X_2 = X_2' \cup (X_2'' \setminus \{v\})$. We claim that $(X_1, X_2)$ is a chordal cover of $G$ that extends $(P, W_1, W_2)$. Suppose for a contradiction that there is a hole $H \subseteq X_1$. Since $X_1'$ is chordal, it follows that $H \cap (X_1'' \setminus X_1') \neq \emptyset$, so $H$ contains a path $Q$ with ends in $N[v]$ and interior in $G \setminus G'$. But now $Q \cup \{v\}$ is a hole and $Q \cup \{v\} \subseteq X_1''$, a contradiction. It follows that $X_1$ is chordal, and by symmetry, $X_2$ is chordal. Now, $(X_1, X_2)$ is a chordal cover of $G$ that extends $(P, W_1, W_2)$, a contradiction. Therefore, $v_0$ has a neighbor in $C_i$ for $1 \leq i \leq m$, and so in particular, $v_0 \in N(v)$. Now the same proof using $P' = vv_0$ shows that $w_0$ has a neighbor in $C_i$ for $1 \leq i \leq m$. This proves \eqref{ha}. \\

By \eqref{ha}, $\{v_0, w_0\} \subseteq N(v)$ and by \eqref{not-double-star-cutset}, $\{v_0, w_0\}$ is not the center of a double star cutset of $G$, so for all $1 \leq i \leq j \leq m$, there exists a path $Q = q_1 \dd \hdots \dd q_k$  from $C_i$ to $C_j$ that is anticomplete to $\{v_0, w_0\}$ such that $q_1 \in C_i$, $q_k \in C_j$, $Q^* \subseteq N(v)$. 
Let $Q = q_1 \dd \hdots \dd q_k$ be the shortest such path. We may assume up to symmetry that $i = 1$ and $j = 2$. Let $R \subseteq C_1$ be the shortest path with one end $q_1$ such that $R$ contains neighbors of both $v_0$ and $w_0$. Similarly, let $S \subseteq C_2$ be the shortest path with one end $q_k$ such that $S$ contains neighbors of both $v_0$ and $w_0$. (Note that both $R$ and $S$ exist by \eqref{ha}). Let $R = q_1 \dd r_1 \dd \hdots \dd r_\ell$ and let $S = q_k \dd s_1 \dd \hdots \dd s_t$. Since $R$ is the shortest path containing neighbors of both $v_0$ and $w_0$, it follows that $R \setminus \{r_\ell\}$ contains neighbors of at most one of $v_0$ and $w_0$. Similarly, $S \setminus \{s_t\}$ contains neighbors of at most one of $v_0$ and $w_0$. We may assume that $r_\ell$ is the unique neighbor of $v_0$ in $R$. 

\sta{\label{w-in-R} $w_0$ has exactly one neighbor $r_w$ in $R$ and $r_w \neq r_\ell$.} 

Let $H_1$ be the hole given by $H_1 = v_0 \dd v \dd q_2 \dd q_1 \dd R \dd r_\ell \dd v_0$. Since $R$ contains neighbors of $w_0$, it follows that $w_0$ has at least three neighbors in $H_1$: $v_0$, $v$, and a neighbor in $R$. By \eqref{not-star-cutsets}, $w_0$ is not the center of a star cutset of $G$. By Lemma \ref{lem:short-pyramid}, $(H_1, w_0)$ is a short pyramid.  It follows that $w_0$ has exactly one neighbor $r_w$ in $R$ and $r_w \neq r_\ell$. This proves \eqref{w-in-R}. \\

\sta{\label{uniq-nbr-in-S} Let $\{a, b\} = \{v_0, w_0\}$ such that $s_t$ is the unique neighbor of $a$ in $S$. Then, $b$ has exactly one neighbor in $S$ and $b$ is non-adjacent to $s_t$.}

Let $H_2$ be the hole given by $H_2 = a \dd v \dd q_{k-1} \dd q_k \dd S \dd s_t \dd a$. Since $S$ contains neighbors of $b$, it follows that $b$ has at least three neighbors in $H_1$: $a$, $v$, and a neighbor in $S$. By \eqref{not-star-cutsets}, $b$ is not the center of a star cutset of $G$, and so by Lemma \ref{lem:short-pyramid}, $(H_2, w_0)$ is a short pyramid. It follows that $b$ has exactly one neighbor $s_w$ in $S$ and $s_w \neq s_t$. This proves \eqref{uniq-nbr-in-S}. \\

Suppose first that $s_t$ is the unique neighbor of $v_0$ in $S$. By \eqref{uniq-nbr-in-S}, it follows that $w_0$ has a unique neighbor $s_w$ in $S$ and $s_w \neq s_t$. Let $H_3$ be the hole given by $H_3 = v_0 \dd r_\ell \dd R \dd q_1 \dd Q \dd q_k \dd S \dd s_t \dd v_0$. It holds that $w_0$ has three pairwise non-adjacent neighbors $v_0, s_w, r_w$ in $H_3$, so $(H_3, w_0)$ is a proper wheel. But now by Lemma \ref{lem:proper-wheel}, $w_0$ is the center of a star cutset in $G$, contradicting \eqref{not-star-cutsets}.

Therefore, $s_t$ is the unique neighbor of $w_0$ in $S$. By \eqref{uniq-nbr-in-S}, it follows that $v_0$ has a unique neighbor $s_v$ in $S$ and $s_v \neq s_t$. Let $H_4$ be the hole given by $H_4 = v_0 \dd s_v \dd S \dd q_k \dd Q \dd q_1 \dd R \dd r_w \dd w_0 \dd v_0$. It follows that $(H_4, v)$ is a wheel and $v$ has $k$ neighbors in $H_4$. Next, let $H_5$ be the hole given by $H_5 = v_0 \dd s_v \dd S \dd q_k \dd Q \dd q_1 \dd R \dd r_\ell \dd v_0$. It follows that $(H_5, v)$ is a wheel and $v$ has $k-1$ neighbors in $H_5$. Since $k$ and $k-1$ have different parities, it follows that one of $(H_4, v)$ and $(H_5, v)$ is an even wheel, contradicting Lemma \ref{lem:no-even-wheel}. This completes the proof of the lemma. \end{proof}

Finally, we apply the previous lemma to the class of graphs with no even hole and no sector wheel.  Recall that a {\em sector wheel} is a wheel $(H, w)$ such that $N(w) \cap H$ is a path. 

\begin{thm}
\label{thm:star-cutset}
Let $G$ be minimal non-weakly FPE with no even hole and no sector wheel. Then, $G$ has no star cutset.  
\end{thm}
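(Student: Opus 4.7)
The plan is to argue by contradiction: suppose $G$ admits a star cutset $C \subseteq N[v]$ with $v \in C$. By Lemma~\ref{lem:no-clique-cutset}, $C$ cannot be a clique, so $C$ is a proper star cutset. Since ``no sector wheel'' implies ``no twin wheel,'' the hypotheses of Lemma~\ref{lem:no-full-star-cutset} will be met as soon as I secure a length-one witness path.

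First, I would promote the witness path to length one. Let $P$ be a witness path of $G$ with witness sets $W_1, W_2$. If $P = \{v_0\}$ has length zero, pick any neighbor $w_0$ of $v_0$---such a $w_0$ exists since $v_0$ is not universal by Lemma~\ref{lem:v-not-full}---and build a length-one precover $(v_0 w_0, W_1', W_2')$ by placing $w_0$ in both witness sets and redistributing $N[w_0] \setminus N[v_0]$ across $W_1', W_2'$ while preserving chordality, in the spirit of Lemma~\ref{lem:construct-precoloring} and using the no-sector-wheel hypothesis. Any chordal cover $(X_1, X_2)$ extending $(v_0w_0, W_1', W_2')$ yields $(X_1, X_2 \setminus \{w_0\})$ as a chordal cover extending $(\{v_0\}, W_1, W_2)$, contradicting that $P$ is a witness. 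Thus we may assume $P = v_0 w_0$ has length one.

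With $P$ of length one in hand, Lemma~\ref{lem:no-full-star-cutset} applies and rules out full star cutsets. The first subclaim of its proof---obtained by applying Lemma~\ref{lem:one-comp-anticomplete} with $X = \{v_0\}$ (or $\{w_0\}$) and contradicting Lemma~\ref{lem:star-cutset-helper}---further shows that neither $v_0$ nor $w_0$ is the center of any star cutset. Hence the center $v$ of $C$ satisfies $v \notin \{v_0, w_0\}$ and $C \subsetneq N[v]$. By Lemma~\ref{lem:v-not-full} and the failure of $N[v]$ to be a cutset, $V(G) \setminus N[v]$ is nonempty and connected, so it lies in a single component $D_1$ of $G \setminus C$, and every other component $D_j$ of $G \setminus C$ is contained in $N(v) \setminus C$ with all external neighbors in $C$.

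Finally, I would derive a contradiction from the second subclaim of the proof of Lemma~\ref{lem:no-full-star-cutset}, which asserts that $\{v_0, w_0\}$ is not the center of a double star cutset. If $C \subseteq N[\{v_0, w_0\}]$, then $C \cup \{v_0, w_0\}$ is itself a double star cutset of $G$ centered at $\{v_0, w_0\}$, immediately giving the contradiction. Otherwise some $c \in C \setminus N[\{v_0, w_0\}]$ exists, and I would mirror the hole-and-wheel construction of Lemma~\ref{lem:no-full-star-cutset}: use shortest paths through $c$ from $D_1$ to some component $D_j$, together with shortest paths in $D_1$ and $D_j$ containing the nearest neighbors of $v_0$ and $w_0$, and apply Lemmas~\ref{lem:short-pyramid} and~\ref{lem:no-even-wheel} together with the no-sector-wheel hypothesis to produce either an even wheel or a non-sector proper wheel---either of which is forbidden. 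The main obstacle is this last step: the argument in Lemma~\ref{lem:no-full-star-cutset} used $N[v]$ being a full cutset in an essential way, so adapting the wheel-chase to a star cutset $C \subsetneq N[v]$ requires verifying that enough of the ambient connectivity passes through $C$ alone to build the required holes.
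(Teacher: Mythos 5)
Your proposal sets up the right frame (rule out full star cutsets via Lemma~\ref{lem:no-full-star-cutset}, conclude that $G\setminus N[v]$ is connected, and note that every other component of $G\setminus X$ lies inside $N(v)$), but the actual contradiction for a proper, non-full star cutset is missing, and you say so yourself: the ``wheel-chase'' of Lemma~\ref{lem:no-full-star-cutset} does not transfer. The obstruction is real, not just unverified: that argument builds holes of the form $v_0\dd v\dd q_2\dd q_1\dd R\dd r_\ell\dd v_0$ whose interiors live in two components that are \emph{anticomplete to $v$}, whereas here all but one component of $G\setminus X$ is contained in $N(v)$, i.e.\ $v$ is complete to it, so no such hole through $v$ exists. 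Your fallback (if $X\subseteq N[\{v_0,w_0\}]$ use the double-star-cutset claim) only handles a special case, and even there you would still need to check that $X\cup\{v_0,w_0\}$ remains a cutset.

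The idea you are missing is that the no-sector-wheel hypothesis closes the argument directly, with no wheel-chase at all. Let $C$ be the unique component of $G\setminus N[v]$ and let $A$ be a component of $G\setminus X$ anticomplete to $C$; then $A\subseteq N(v)$ and $A\neq\emptyset$. Set $B=N(C)\cap N(A)\subseteq N(v)$. If $b_1,b_2\in B$ were non-adjacent, concatenating an induced $b_1$--$b_2$ path with interior in $C$ and one with interior in $A$ gives a hole $H$ with $N(v)\cap H$ equal to the second path (which has at least three vertices), i.e.\ a sector wheel --- contradiction. Hence $B$ is a clique, so $\{v\}\cup B$ is a clique cutset separating $A$ from $C$, contradicting Lemma~\ref{lem:no-clique-cutset}. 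This reduction to the clique-cutset case is the heart of the paper's proof and is absent from your proposal. (Your preliminary step promoting a length-zero witness path to length one is also only sketched --- Lemma~\ref{lem:construct-precoloring} formally requires no star cutset, which is exactly what you cannot assume here --- but that is a secondary issue compared to the missing endgame.)
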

\begin{proof}
Assume for contradiction that $G$ has a star cutset. Let $v \in V(G)$ be such that there exists a cutset $X \subseteq N[v]$ of $G$ with $v \in X$. By Lemma \ref{lem:no-full-star-cutset}, $v$ is not the center of a full star cutset of $G$. This fact, together with  Lemma \ref{lem:v-not-full}, implies that there is exactly one component $C$ of $G \setminus N[v]$. Let $A$ be a connected component of  $G \setminus X$ such that 
$A$ is anticomplete to $C$. Then $A \subseteq N(v)$. Since $v$ is the center of a star cutset, it follows that $A \neq \emptyset$. Let $B = N(C) \cap N(A)$. Since $C$ is a connected component of $G \setminus N[v]$, it follows that $N(C) \subseteq N(v)$, and so $B \subseteq N(v)$. Also, note that $B$ can be empty. 
Suppose there exist $b_1, b_2 \in B$ such that $b_1$ is non-adjacent to $b_2$. Let $P_1$ be a path from $b_1$ to $b_2$ with $P_1^* \subseteq C$ and let $P_2$ be a path from $b_1$ to $b_2$ with $P_2^* \subseteq A$. Now, $P_1 \cup P_2$ is a hole and $v$ is complete to $P_2$ and anticomplete to $P_1^*$, so $(P_1 \cup P_2, v)$ is a sector wheel, a contradiction. Therefore, $B$ is a clique. Since $B = N(C) \cap N(A)$, it follows that $\{v\} \cup B$ separates $A$ from $C$, so $\{v\} \cup B$ is a clique cutset of $G$. But by Lemma \ref{lem:no-clique-cutset}, $G$ has no clique cutset, a contradiction. This completes the proof of the theorem. 



\end{proof}

\section{Putting it all together}
\label{sec:final}

In this section, we prove Theorem \ref{thm:main}.

\begin{proof}[Proof of Theorem \ref{thm:main}]
Let $G$ be an even-hole-free graph with no sector wheel, and suppose for a contradiction that $G$ is minimal non-weakly FPE. If $G$ has a clique cutset, then $G$ is weakly FPE by Lemma \ref{lem:no-clique-cutset}. If $G$ has a proper star cutset, then $G$ is weakly FPE by Theorem \ref{thm:star-cutset}. Therefore, $G$ has no star cutset. Note that $G$ is non-FPE and has no star cutset. Let $H$ be an induced subgraph of $G$ that is minimal with these properties, so in particular, $H$ has no star cutset, $H$ is non-FPE, and every induced subgraph of $H$ with no star cutset is FPE.  By Theorem \ref{thm:MNFPE}, since $H$ is minimal with no star cutset, $H$ is FPE, a contradiction. This completes the proof. 
\end{proof}
\bibliographystyle{acm}
\bibliography{sample}

\end{document}